\documentclass{amsart}
\usepackage{amssymb}
\usepackage{amsmath}
\usepackage{amsthm}
\usepackage{graphicx}
\usepackage{float}
\usepackage{color}
\usepackage{csquotes}


\newcommand{\re}{\mathop{\mathrm{Re}}}
\newcommand{\im}{\mathop{\mathrm{Im}}}
\newcommand{\HQR}{\mathop{\mathrm{HQR}}}
\newcommand{\Hol}{\mathop{\mathrm{Hol}}}
\newcommand{\Har}{\mathop{\mathrm{Har}}}

\newcommand{\artanh}{\mathop{\mathrm{artanh}}}

\newtheorem{theorem}{Theorem}
\newtheorem{lemma}{Lemma}
\newtheorem{example}{Example}
\newtheorem{proposition}{Proposition}
\newtheorem{remark}{Remark}
\newtheorem{definition}{Definition}
\author{Miodrag Mateljevi\' c  \&  Marek Svetlik}
\address{Faculty of mathematics, University of Belgrade, Studentski Trg 16,
Belgrade, Republic of Serbia} \email{miodrag@matf.bg.ac.rs} \email{svetlik@matf.bg.ac.rs}
\title[The Schwarz lemma for HQR mappings]{Hyperbolic metric on the strip and the Schwarz lemma for HQR mappings}

\date{August 20, 2018}

\keywords{The Schwarz lemma; the Schwarz-Pick lemma; harmonic functions; holomorphic and quasiregular maps; hyperbolic metric on the strip}

\subjclass[2010]{Primary 30C80; Secondary 31C05, 30C75.}

\thanks{Research partially supported by Ministry of Education, Science and Technological Development
of the Republic of Serbia, Grant No. 174 032.}

\begin{document}

\maketitle

\begin{abstract}
We give simple proofs of various versions of the Schwarz lemma
for real valued harmonic functions and for holomorphic (more generally harmonic quasi\-re\-gu\-lar, shortly  HQR) mappings
with the strip codomain. Along the way  using the principle of subordination and the corresponding
conformal mapping, depicted  on the Figure \ref{fig:1}, we get a simple
proof of a new  version of the Schwarz lemma for  real valued harmonic functions (see Theorems
\ref{th:schwhar}  and  \ref{th:schwhar1}) and   Theorem~\ref{th:schwhol} related to  holomorphic mappings.
Using  the Schwarz-Pick lemma related to distortion   for harmonic mappings  and the elementary properties of  the hyperbolic
geometry  of the strip we prove Lemma \ref{lem:hqr}, which is a key ingredient  in
the proof of  Theorem  \ref{th:schwhqr}  which  yields  optimal estimates  for modulus of  HQR  mappings.

\end{abstract}

\section{Introduction and preliminaries}


Motivated by the role of the Schwarz lemma in complex analysis and
numerous fundamental results (see \cite{Ahl,osserman,krantz,MMSchw_Kob} and
 references cited there and for some recent result which are in our
research  direction \cite{AzerOr0,kavu,Khal,MMar,zhu}), in 2016, cf.
\cite{RgSchw1}(a), the first author has posted the current research project \enquote{Schwarz lemma, the Carath\'{e}odory and Kobayashi Metrics and Applications in Complex Analysis}\footnote{Motivated by  S. G. Krantz  paper \cite{krantz}.}.  Various discussions regarding the subject can also  be found  in the Q\&A section on ResearchGate under the question \enquote{What are the most recent versions of the Schwarz lemma ?},\cite{RgSchw1}(b)\footnote{The subject has been presented at Belgrade analysis seminar \cite{Bg_sem}.}. In this project  and  in  \cite{MMSchw_Kob}, cf. also \cite{kavu} we developed  the method  related to  holomorphic mappings with strip codomain (we refer to this method as  the approach  via the Schwarz-Pick lemma for holomorphic maps from the unit disc into a strip). Note here that our use of terms the Schwarz lemma and the Schwarz-Pick lemma is refer to the corresponding   versions for modulus and hyperbolic distances, respectively (we follow the terminology used in \cite{BeardonMinda}).

In particular  our work here  is related  to  previous  works   \cite{mat2,MaK},  and some recent results of Kalaj and  Vuorinen \cite{kavu} (shortly KV-results; see also D. Khavinson  \cite{Khav}, G. Kresin and V. Maz'ya \cite{Maz}).
As we mentioned in \cite{MMSchw_Kob}, it seems that  KV-results  influenced  further  research by H. Chen \cite{hhChen}, M. Markovi\'c \cite{MMar}, A. Khalfallah \cite{Khal} and P. Melentijevi\'c \cite{pmel}.

One of the purpose  of this paper, which  is a relatively elementary contribution  and  continuation of these research, is to demonstrate our approach and make a common frame for previous works.

Throughout this paper by $\mathbb{U}$ we denote the unit disc $\{z\in\mathbb{C}:|z|<1\}$. By the Riemann mapping theorem simply connected plane domains different from  $\mathbb{C}$ (we call these domains hyperbolic) are conformally equivalent to $\mathbb{U}$. Accordingly if $\Omega$ is a hyperbolic domain then by $\rho_{\Omega}(z)|dz|$ we denote the hyperbolic metric of $\Omega$. This metric induces a hyperbolic distance on $\Omega$ in the following way
\begin{equation*}
    d_{\Omega}(z_1,z_2)=\inf\int_{\gamma}\rho_{\Omega}(z)|dz|,
\end{equation*}
where the infimum is taken over all $C^1$ curves $\gamma$ joining $z_1$ to $z_2$ in $\Omega$.

It is well known that $\displaystyle\rho_{\mathbb{U}}(z)|dz|=\frac{2|dz|}{1-|z|^2}$ and immediately follows that for all $z_1,z_2\in\mathbb{U}$ it holds
\begin{equation*}
    d_{\mathbb{U}}(z_1,z_2)=\ln\frac{1+\sigma_{\mathbb{U}}(z_1,z_2)}{1-\sigma_{\mathbb{U}}(z_1,z_2)}=2\artanh{\sigma_{\mathbb{U}}(z_1,z_2)},
\end{equation*}
where the pseudo-hyperbolic distance $\sigma_{\mathbb{U}}$ is given by $\displaystyle\sigma_{\mathbb{U}}(z_1,z_2)=\left|\frac{z_1-z_2}{1-z_1\overline{z_2}}\right|$.

If $f$ is a conformal map from hyperbolic domain $\Omega$ onto $\mathbb{U}$ then the hyperbolic metric $\rho_{\Omega}(z)|dz|$ of $\Omega$ is defined by $\rho_{\Omega}(z)|dz|=\rho_{\mathbb{U}}(f(z))|f'(z)|$. Hence, one can transfer the concept of the hyperbolic distance from $\mathbb{U}$ on hyperbolic domain $\Omega$.

For more details related to hyperbolic domains, hyperbolic metric and distance, see, for example \cite{Ahl,BeardonMinda,MMTopics}.

In this paper, except the disc $\mathbb{U}$, of other hyperbolic domains we will mainly use the strip  $\mathbb{S}=\{z\in\mathbb{C}:-1<\re{z}<1\}$.

Let   $D,G$ be  domains in  $\mathbb{C}$. By $\Hol(D,G)$ (respectively $\Har(D,G)$) we denote the set of the all holomorphic (respectively harmonic) mappings $f:D\rightarrow G$.

By $d_e$ we denote Euclidean distance in $\mathbb{C}$ and for $z\in\mathbb{C}$ we define the functions $e$ and $R_e$,  by  $e(z)=d_e(0,z)=|z|$ and $R_e(z)= \re{z}$, respectively.

%

For completeness we first give the classical Schwarz lemma  which is a direct  corollary of maximum modulus principle.

\begin{theorem}[The classical Schwarz lemma - the Schwarz lemma for holomorphic maps from $\mathbb{U}$ into $\mathbb{U}$]\label{th:schwclassic}
Let $f\in\Hol(\mathbb{U},\mathbb{U})$ and $f(0)=0$. Then
\begin{equation}\label{schwclassic:fla1}
    |f(z)|\leqslant|z|, \quad \mbox{ for all } \quad z\in\mathbb{U}.
\end{equation}
and
\begin{equation}\label{schwclassic:fla2}
    |f'(0)|\leqslant1.
\end{equation}
In (\ref{schwclassic:fla1}) the equality holds for one $z\in\mathbb{U}-\{0\}$ and in (\ref{schwclassic:fla2}) the equality holds if and only if $f(z)=\alpha z$, where $\alpha\in\mathbb{C}$ such that $|\alpha|=1$.
\end{theorem}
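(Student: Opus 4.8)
The plan is to deduce the two inequalities from the maximum modulus principle applied to an auxiliary holomorphic function, and then to analyze the equality cases using the strict maximum principle. First I would consider the function
\begin{equation*}
g(z)=\frac{f(z)}{z}, \quad z\in\mathbb{U}\setminus\{0\},
\end{equation*}
and observe that since $f(0)=0$, the function $f$ has a zero at the origin, so $g$ extends holomorphically across $z=0$ by setting $g(0)=f'(0)$; this is the standard removable-singularity argument via the power series $f(z)=\sum_{n\geqslant1}a_nz^n$, which gives $g(z)=\sum_{n\geqslant0}a_{n+1}z^n$.

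Next I would estimate $|g|$ on circles $|z|=r$ with $0<r<1$. On such a circle, $|g(z)|=|f(z)|/r\leqslant 1/r$ because $f$ maps into $\mathbb{U}$. By the maximum modulus principle applied on the closed disc $\{|z|\leqslant r\}$, the same bound $|g(z)|\leqslant 1/r$ holds for all $|z|\leqslant r$. Letting $r\to1^-$ with $z$ fixed shows $|g(z)|\leqslant1$ for every $z\in\mathbb{U}$, which is exactly (\ref{schwclassic:fla1}); evaluating at $z=0$ gives $|f'(0)|=|g(0)|\leqslant1$, which is (\ref{schwclassic:fla2}).

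Finally, for the equality statements I would invoke the strict form of the maximum modulus principle: $|g|$ attains its maximum $1$ at some interior point of $\mathbb{U}$ precisely when $g$ is a constant $\alpha$ of modulus one, i.e. $f(z)=\alpha z$. This covers both the case when equality holds in (\ref{schwclassic:fla1}) for some $z\neq0$ (interior maximum of $|g|$) and the case of equality in (\ref{schwclassic:fla2}) (interior maximum at $0$); conversely $f(z)=\alpha z$ with $|\alpha|=1$ obviously realizes equality everywhere. The only mild subtlety — and the step that deserves the most care rather than being a genuine obstacle — is justifying the removable singularity of $g$ at the origin cleanly; once $g$ is known to be holomorphic on all of $\mathbb{U}$, the argument is entirely standard.
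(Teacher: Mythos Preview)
Your proof is correct and is precisely the standard argument via the maximum modulus principle that the paper itself alludes to; the paper does not actually give a detailed proof of this theorem but simply remarks that it ``is a direct corollary of maximum modulus principle,'' which is exactly what you carry out with the auxiliary function $g(z)=f(z)/z$.
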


The following theorem is known as the Schwarz lemma for harmonic maps from $\mathbb{U}$ into itself.

\begin{theorem}[The Schwarz lemma for harmonic maps from $\mathbb{U}$ into $\mathbb{U}$, \cite{heinz}, {\cite[p. 77]{duren}}]\label{th:schwharcmplx}
Let $f\in\Har(\mathbb{U},\mathbb{U})$ and $f(0)=0$. Then
\begin{equation}\label{schwharcmplx:fla1}
    |f(z)|\leqslant\frac{4}{\pi}\arctan{|z|}, \quad \mbox{ for all } \quad z\in\mathbb{U},
\end{equation}
and this inequality is sharp for each point $z\in\mathbb{U}$.
\end{theorem}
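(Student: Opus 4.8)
The plan is to deduce the statement from the classical Schwarz lemma (Theorem~\ref{th:schwclassic}) by the subordination device attached to the conformal map of $\mathbb{U}$ onto the strip $\mathbb{S}$ depicted on Figure~\ref{fig:1}. Fix $z_0\in\mathbb{U}$. Replacing $f$ by $e^{i\gamma}f$ for a suitable real $\gamma$ does not affect the hypotheses, so I may assume $f(z_0)\geqslant 0$; then $|f(z_0)|=u(z_0)$, where $u=\re f$ is real valued, harmonic, satisfies $|u|<1$ on $\mathbb{U}$ (because $|f|<1$), and $u(0)=0$.

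Since $\mathbb{U}$ is simply connected, $u$ admits a harmonic conjugate $v$ normalised by $v(0)=0$, and then $g:=u+iv\in\Hol(\mathbb{U},\mathbb{S})$ with $g(0)=0$. Let $\psi\colon\mathbb{U}\to\mathbb{S}$ be the conformal map with $\psi(0)=0$, explicitly $\psi(\zeta)=\frac{2i}{\pi}\log\frac{1+\zeta}{1-\zeta}$. Then $\psi^{-1}\circ g\in\Hol(\mathbb{U},\mathbb{U})$ fixes the origin, so Theorem~\ref{th:schwclassic} gives $|\psi^{-1}(g(z_0))|\leqslant|z_0|$; equivalently $g(z_0)=\psi(w_0)$ for some $w_0$ with $|w_0|\leqslant|z_0|$. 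As $\re\psi$ is harmonic on a neighbourhood of the closed disc $\{|\zeta|\leqslant|z_0|\}$, the maximum principle yields
\begin{equation*}
  |f(z_0)|=u(z_0)=\re g(z_0)=\re\psi(w_0)\leqslant\max_{|\zeta|=|z_0|}\re\psi(\zeta).
\end{equation*}

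It remains to evaluate this maximum, the one genuine computation in the argument. One has $\re\psi(\zeta)=-\frac{2}{\pi}\arg\frac{1+\zeta}{1-\zeta}$, and the M\"obius map $\zeta\mapsto\frac{1+\zeta}{1-\zeta}$ carries the circle $|\zeta|=r$ (with $r=|z_0|$) onto a circle symmetric with respect to the real axis, centred at $\frac{1+r^2}{1-r^2}>0$ with radius $\frac{2r}{1-r^2}$. Hence $\arg\frac{1+\zeta}{1-\zeta}$ runs over a symmetric interval $[-\alpha,\alpha]$ with $\sin\alpha=\frac{2r}{1+r^2}$, i.e.\ $\alpha=2\arctan r$, so $\max_{|\zeta|=|z_0|}\re\psi(\zeta)=\frac{2\alpha}{\pi}=\frac{4}{\pi}\arctan|z_0|$, which is (\ref{schwharcmplx:fla1}).

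For sharpness at a prescribed $z_0\neq 0$ (the case $z_0=0$ being trivial), consider the real valued harmonic function $f_0(z)=\frac{2}{\pi}\arg\frac{1+z}{1-z}$, which maps $\mathbb{U}$ into $(-1,1)\subset\mathbb{U}$ and fixes $0$; by the circle computation above it attains the value $\frac{4}{\pi}\arctan|z_0|$ at some point $z_1$ with $|z_1|=|z_0|$, and replacing $f_0(z)$ by $f_0\bigl((z_1/z_0)\,z\bigr)$ moves that point to $z_0$ while keeping it an admissible competitor in $\Har(\mathbb{U},\mathbb{U})$. The only step that needs real care is the elementary plane‑geometry fact that a circle of centre $c>0$ and radius $\rho<c$ is seen from the origin within the angle $2\arcsin(\rho/c)$; everything else is bookkeeping with Theorem~\ref{th:schwclassic} and with the harmonicity of $\re\psi$.
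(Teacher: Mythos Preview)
Your proof is correct and follows essentially the same route as the paper. The paper deduces Theorem~\ref{th:schwharcmplx} from the real-valued case (Theorem~\ref{th:schwhar}) by a rotation, and proves Theorem~\ref{th:schwhar} via the holomorphic extension $g=u+iv\in\Hol(\mathbb{U},\mathbb{S})$, the subordination principle (equivalently, the classical Schwarz lemma applied to $\psi^{-1}\circ g$), and the M\"obius-circle computation of Lemma~\ref{lem:hypdisk}; you perform the same steps in a different order (rotation first, then the inline computation), and your $\sin\alpha=R/c$ is the same angle identity as the paper's $\tan\theta_0=R$.
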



Using the concept of the hyperbolic metric and hyperbolic distance on hyperbolic domains one can derive the Schwarz-Pick lemma for simply connected domains as a corollary of the classical Schwarz lemma:


\begin{theorem}[The Schwarz-Pick lemma for simply connected domains, {\cite[Theorem 6.4.]{BeardonMinda}}]\label{th:schwpick}
Let $\Omega_1$ and $\Omega_2$ be hyperbolic domains and $f\in\Hol(\Omega_1,\Omega_2)$. Then
\begin{equation}\label{schwpick:fla1}
    \rho_{\Omega_2}(f(z))|f'(z)|\leqslant\rho_{\Omega_1}(z), \quad \mbox{ for all } \quad z\in\Omega_1.
\end{equation}
and
\begin{equation}\label{schwpick:fla2}
    d_{\Omega_2}(f(z_1),f(z_2))\leqslant d_{\Omega_1}(z_1,z_2), \quad \mbox{ for all } \quad z_1,z_2\in\Omega_1.
\end{equation}
In (\ref{schwpick:fla1}) and (\ref{schwpick:fla2}) the equalities hold if and only if $f$ is a conformal isomorphism from $\Omega_1$ into $\Omega_2$.
\end{theorem}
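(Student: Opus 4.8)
The plan is to reduce the whole statement to the unit disc, where it is a direct consequence of the classical Schwarz lemma (Theorem~\ref{th:schwclassic}). First I would settle the disc case: given $g\in\Hol(\mathbb{U},\mathbb{U})$ and $a\in\mathbb{U}$, put $b=g(a)$ and introduce the M\"obius automorphisms $\tau_a(w)=\frac{w+a}{1+\overline{a}w}$ and $\tau_b$, so that $h=\tau_b^{-1}\circ g\circ\tau_a\in\Hol(\mathbb{U},\mathbb{U})$ satisfies $h(0)=0$. By \eqref{schwclassic:fla2} we have $|h'(0)|\leqslant1$, and expanding $h'(0)$ by the chain rule (using $|\tau_a'(0)|=1-|a|^2$ and $|(\tau_b^{-1})'(b)|=1/(1-|b|^2)$) turns this into $\rho_{\mathbb{U}}(b)\,|g'(a)|\leqslant\rho_{\mathbb{U}}(a)$, i.e.\ the infinitesimal inequality \eqref{schwpick:fla1} on $\mathbb{U}$. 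Integrating it along a $C^1$ curve $\gamma$ from $z_1$ to $z_2$ and taking the infimum over such $\gamma$ yields the distance version \eqref{schwpick:fla2} on $\mathbb{U}$.

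Next I would transfer this to arbitrary hyperbolic domains. By the Riemann mapping theorem choose conformal maps $\varphi_j\colon\mathbb{U}\to\Omega_j$ for $j=1,2$; by the definition of the hyperbolic metric recalled in the introduction, $\rho_{\Omega_j}(\varphi_j(w))\,|\varphi_j'(w)|=\rho_{\mathbb{U}}(w)$, and consequently $d_{\Omega_j}(\varphi_j(w_1),\varphi_j(w_2))=d_{\mathbb{U}}(w_1,w_2)$. Since $f(\Omega_1)\subseteq\Omega_2$, the composition $g=\varphi_2^{-1}\circ f\circ\varphi_1$ lies in $\Hol(\mathbb{U},\mathbb{U})$. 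Writing $z=\varphi_1(w)$, applying the disc case to $g$, and substituting the two identities above, the $\rho_{\mathbb{U}}$–factors and the derivatives of $\varphi_1,\varphi_2$ cancel, giving \eqref{schwpick:fla1} on $\Omega_1$; then \eqref{schwpick:fla2} follows either by integrating \eqref{schwpick:fla1} along curves or directly from the disc distance inequality together with the identities for $d_{\Omega_j}$.

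Finally, the rigidity clause. If equality holds in \eqref{schwpick:fla1} at some $z_0\in\Omega_1$, then running the reduction backwards the auxiliary map $h$ attains $|h'(0)|=1$, so by the equality clause of Theorem~\ref{th:schwclassic} $h$ is a rotation; hence $g=\tau_b\circ h\circ\tau_a^{-1}$ is an automorphism of $\mathbb{U}$, and therefore $f=\varphi_2\circ g\circ\varphi_1^{-1}$ maps $\Omega_1$ conformally onto $\Omega_2$. Equality in \eqref{schwpick:fla2} for some $z_1\neq z_2$ reduces to this case by restricting attention to the hyperbolic geodesic joining $z_1$ and $z_2$. Conversely, if $f$ is a conformal isomorphism then $g$ is a disc automorphism, for which \eqref{schwpick:fla1} and \eqref{schwpick:fla2} hold with equality, and the identities of the previous paragraph carry these equalities back to $\Omega_1$ and $\Omega_2$. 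The only points that need care are the chain-rule bookkeeping in the cancellation $\rho_{\Omega_2}(f(z))|f'(z)|=\rho_{\mathbb{U}}(g(w))|g'(w)|\cdot\rho_{\Omega_1}(z)/\rho_{\mathbb{U}}(w)$, and the remark that the outcome does not depend on the chosen Riemann maps $\varphi_j$ — which is exactly the statement that automorphisms of $\mathbb{U}$ are hyperbolic isometries, already contained in the equality case of the disc version proved in the first step.
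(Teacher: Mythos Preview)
The paper does not actually give a proof of this theorem: it is stated with a citation to \cite[Theorem~6.4]{BeardonMinda}, preceded only by the remark that ``one can derive the Schwarz-Pick lemma for simply connected domains as a corollary of the classical Schwarz lemma.'' Your proposal carries out precisely that derivation in the standard way --- normalize to the disc via Riemann maps, reduce to $h(0)=0$ by M\"obius automorphisms, apply Theorem~\ref{th:schwclassic}, and then integrate --- so there is nothing substantive to compare; your argument is correct and is exactly the proof one finds in \cite{BeardonMinda}. The only place that is slightly compressed is the equality case of \eqref{schwpick:fla2}: saying it ``reduces to this case by restricting attention to the hyperbolic geodesic'' is right in spirit, but to make it airtight you should note that equality in the distance inequality forces the image of the geodesic from $z_1$ to $z_2$ to have hyperbolic length equal to $d_{\Omega_2}(f(z_1),f(z_2))$, which in turn forces equality in the infinitesimal inequality \eqref{schwpick:fla1} at every point of that geodesic, whence the rigidity clause already established applies.
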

In this paper we will use only the special case of this result if the domain is the unit disc  and  the  codomain is the strip.\\



In  Example \ref{exa:phi} we consider the conformal mapping $\phi$ from the unit disc $\mathbb{U}$ onto the strip $\mathbb{S}$. In  Lemmas \ref{lem:hypdisk}  and \ref{lem:hypdisk1}  we explicitly find  the maximum and minimum of the function  $R_e$ and the maximum of the function $e$ on the closed hyperbolic disc in $\mathbb{S}$ which is obtained as image of the closed hyperbolic disc with center $0$ in $\mathbb{U}$ by mapping $\phi$. Note that the proof of Lemma \ref{lem:hypdisk} is elementary  and it is based on the properties of mapping $\phi$. The proof of Lemma \ref{lem:hypdisk1} is based on Proposition \ref{prop:strip} which gives us an interesting relation between the hyperbolic and Euclidean distance on $\mathbb{S}$. Theorem \ref{th:schwhar} follows directly  from the formula (\ref{hypdisk:fla0}) of Lemma \ref{lem:hypdisk}  and the subordination principle. Further development  of this  method  yields  Theorem \ref{th:schwhar1} (without hypothesis that $0$ is mapped to $0$) which seems to be a new result (see also Example \ref{exa:phib} and Lemma \ref{lem:hypdisk0}).

It seems here  that it is right place to emphasize the following difference between holomorphic and harmonic maps.

If $f$ is holomorphic mapping from $\mathbb{U}$ into itself such that $f(0)=b$, where $b\in\mathbb{U}$, then using the mapping $f^b=\varphi_b \circ f$,
where $\varphi_b$ is conformal automorphism of $\mathbb{U}$ (see Example \ref{exa:varphia} below), we reduce this situation to the case $b=0$, since $f^b(0)=0$. As far as we know the researchers have some difficulties  to handle the case  $f(0)=b$  if  $f$ is harmonic mapping from $\mathbb{U}$ into $(-1,1)$,  since in that case the mapping  $f^b$  is not harmonic in general. Our method overcome this difficulty.

To get optimal estimate  for modulus of holomorphic
(more generally HQR)  mappings  we use
the elementary properties of  the hyperbolic
geometry  of the strip, see  Lemmas \ref{lem:hypdisk1}  and  \ref{lem:hqr}, and   Theorems \ref{th:schwhol} and  \ref{th:schwhqr}.

In order to establish Theorem \ref{th:schwhqr} (the Schwarz lemma for HQR maps from $\mathbb{U}$ into $\mathbb{S}$) among other things we will use the following elementary considerations (see \cite{MMSchw_Kob}):

\begin{itemize}
\item[(I)] Suppose  that $f\in\Hol(\mathbb{U},\mathbb{S})$. Then by Theorem \ref{th:schwpick} we have $\rho_{\mathbb{S}}(f(z))|f'(z)|\leqslant\rho_{\mathbb{U}}(z)$, for all $z\in \mathbb{U}$.
\item[(II)] If $f=u+iv$ is  a  complex valued  harmonic and $F=U+iV$ is a holomorphic function on a domain  $D$   such that $\re{f}=\re{F}$  on  $D$ (in this setting we say that $F$ is associated to $f$ or to $u$), then $F'=U_x+iV_x = U_x-iU_y = u_x-iu_y$. Hence, if $\nabla u=(u_x,u_y)=u_x+iu_y$ then $F'=\overline{\nabla u}$ and $|F'|=|\overline{\nabla u}|=|\nabla u|$.
\item[(III)] Suppose that $D$ is a simply connected plane domain and $f:D \rightarrow \mathbb{S}$ is a complex valued harmonic function. Then it is known from the standard course of complex analysis that there is a holomorphic function $F$ on $D$  such that  $\re{f} = \re{F}$ on $D$, and it is clear that $F:D\rightarrow\mathbb{S}$.
\item[(IV)] The hyperbolic density $\rho_{\mathbb{S}}$ at point $z$  depends only on  $\re{z}$.
\end{itemize}
By  (I)-(IV)  it is readable  that   we have
\begin{proposition}[{\cite[Proposition 2.4]{MMSchw_Kob}}, \cite{kavu,hhChen}]\label{prop:mm}
Let $u:\mathbb{U}\rightarrow(-1,1)$ be harmonic function and let $F$ be holomorphic function which  is associated to $u$. Then
\begin{equation}\label{lemhqr:fla3}
    \rho_{\mathbb{S}}(F(z))|\nabla u(z)|\leqslant \rho_{\mathbb{U}}(z)\quad \mbox{ for all }\quad z\in\mathbb{U}.
\end{equation}
\end{proposition}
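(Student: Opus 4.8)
The plan is to deduce Proposition~\ref{prop:mm} by chaining together the four observations (I)--(IV) exactly as the text suggests, so the main task is to see that each arrow fits the previous one. First I would invoke (III): since $\mathbb{U}$ is simply connected and $u:\mathbb{U}\to(-1,1)$ is harmonic, there is a holomorphic $F$ on $\mathbb{U}$ with $\re F=u$, and since $u$ takes values in $(-1,1)$ we have $F\in\Hol(\mathbb{U},\mathbb{S})$. (This $F$ is the function called ``associated to $u$'' in (II), so the hypothesis of the proposition is consistent.)

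Next I would apply (I), i.e.\ the Schwarz--Pick lemma (Theorem~\ref{th:schwpick}) to $F\in\Hol(\mathbb{U},\mathbb{S})$, giving
\begin{equation*}
    \rho_{\mathbb{S}}(F(z))\,|F'(z)|\leqslant\rho_{\mathbb{U}}(z),\qquad z\in\mathbb{U}.
\end{equation*}
Then I would use (II): writing $F=U+iV$ with $U=u$, the Cauchy--Riemann equations give $F'=U_x+iV_x=u_x-iu_y=\overline{\nabla u}$, hence $|F'(z)|=|\nabla u(z)|$. Substituting this identity into the displayed inequality yields precisely (\ref{lemhqr:fla3}).

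The role of (IV) here is essentially a consistency remark rather than a logical step: it guarantees that $\rho_{\mathbb{S}}(F(z))$ depends only on $\re F(z)=u(z)$, so the left-hand side of (\ref{lemhqr:fla3}) is genuinely determined by $u$ alone (the holomorphic function $F$ is only unique up to an additive imaginary constant, but $\rho_{\mathbb{S}}(F(z))$ is not affected by that ambiguity). I would state this explicitly so that the inequality is well-posed independently of the choice of associated $F$.

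There is no real obstacle: every ingredient is already available. The only point that deserves a sentence of care is the passage $|F'|=|\nabla u|$, which rests on the Cauchy--Riemann equations and the observation that a complex modulus is unchanged under conjugation; and one should note that the existence of $F$ requires simple connectivity of $\mathbb{U}$, which of course holds. Thus the proof is a two-line chain: (III) produces $F\in\Hol(\mathbb{U},\mathbb{S})$, (I) applied to $F$ gives the Schwarz--Pick estimate, and (II) rewrites $|F'|$ as $|\nabla u|$.
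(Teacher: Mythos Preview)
Your proposal is correct and matches the paper's own argument essentially line for line: the paper explicitly states that the proposition is obtained by combining (I)--(IV), and you have spelled out precisely that chain (existence of $F\in\Hol(\mathbb{U},\mathbb{S})$ via (III), Schwarz--Pick estimate via (I), $|F'|=|\nabla u|$ via (II), with (IV) ensuring well-definedness). There is nothing to add.
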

Note  the above described  simple  method
basically    based  on  the Schwarz-Pick lemma for holomorphic maps from $\mathbb{U}$ into $\mathbb{S}$ yields a proof of  the above proposition to which we  refer  as the Schwarz-Pick lemma related to distortion for harmonic maps from $\mathbb{U}$ into $(-1,1)$. By this proposition  we control  distortion  of HQR mappings.

Note here that  there is tightly connection between the subordination principle and the various versions of the Schwarz-Pick lemma for holomorphic maps from $\mathbb{U}$ into $\mathbb{S}$.
Namely  in the proof of Theorem \ref{th:schwhar} and Theorem \ref{th:schwhol} (the Schwarz lemma for holomorphic maps from $\mathbb{U}$ into $\mathbb{S}$) we have used a corollary  of the subordination principle which can be stated in the form (see Definition \ref{def1} for notation):
\begin{itemize}
\item[(V)] If  $f\in\Hol(\mathbb{U},\mathbb{S})$  and $a\in \mathbb{U}$,  then the image  of the hyperbolic disc in $\mathbb{U}$  with hyperbolic center $a$ and hyperbolic radius $\lambda$ under $f$ is in the hyperbolic disc in  $\mathbb{S}$ with hyperbolic center at $f(a)$ and hyperbolic radius $\lambda$.
\end{itemize}
Note that (V) is the Schwarz-Pick lemma for holomorphic maps from $\mathbb{U}$ into $\mathbb{S}$.
Instead of subordination principle in the proof of Theorem \ref{th:schwhqr} (the Schwarz lemma for HQR maps from $\mathbb{U}$ into $\mathbb{S}$)  we use Lemma  \ref{lem:hqr}, which can be consider as a generalization of (V):
\begin{itemize}
\item[(VI)] If  $f\in\HQR_K(\mathbb{U},\mathbb{S})$ (see definition below) and $a\in \mathbb{U}$, then the image  the hyperbolic disc in $\mathbb{U}$  with hyperbolic center $a$ and hyperbolic radius $\lambda$ under $f$ is in the hyperbolic disc in  $\mathbb{S}$ with hyperbolic center $f(a)$ and hyperbolic radius $K \lambda$.
\end{itemize}

Note that proof of  Lemma  \ref{lem:hqr} is based on   Proposition  \ref{prop:mm}   (the Schwarz-Pick lemma related to distortion    for harmonic maps  from $\mathbb{U}$ into $(-1,1)$).


\section{Some useful examples}

Here we consider mappings related to extremal mappings.

\begin{example}\label{exa:phi}
Let $\varphi$ be the mapping defined by $\displaystyle\varphi(z)=\tan{\left(\frac{\pi}{4}z\right)}$. It is easy to check that the
mapping $\varphi$ is holomorphic and injective on $\mathbb{S}$ and maps $\mathbb{S}$ onto $\mathbb{U}$. Therefore the inverse mapping of the $\varphi$ maps $\mathbb{U}$ onto $\mathbb{S}$. Denote that inverse mapping by $\phi$. It is of interest to consider in more detail the mapping $\phi$. Let
\begin{itemize}
  \item $\phi_{1}(z)=iz$,
  \item $\displaystyle\phi_{2}(z)=\frac{1+z}{1-z}$,
  \item $\phi_{3}(z)=\ln{z}$, where $\ln$ is branch of logarithm  defined on $\{z\in\mathbb{C}:\re{z}>0\}$ and determined by $\ln{1}=0$,
  \item $\displaystyle \phi_{4}(z)=-i\frac{2}{\pi}z$,
\end{itemize}
It is easy to check that $\phi=\phi_{4}\circ\phi_{3}\circ\phi_{2}\circ\phi_{1}$. Also, $\phi(0)=0$.
\end{example}

Let's emphasize that throughout this text by $\varphi$ and $\phi$ we always denote the mappings defined in Example \ref{exa:phi}.

\begin{example}\label{exa:varphia}
For  $a\in \mathbb{U}$, define $\displaystyle \varphi_a(z)= \frac{a-z}{1- \overline{a}z}$. It is well known that $\varphi_a$ is a conformal automorphism of $\mathbb{U}$. Specially, for $a\in(-1,1)$, the mapping $\varphi_a$ has the following properties:
\begin{itemize}
  \item[i)] it is decreasing on $(-1,1)$ and maps $(-1,1)$ onto itself;
  \item[ii)] for $r\in[0,1)$ it holds $\displaystyle \varphi_a([-r,r])=[\varphi_a(r),\varphi_a(-r)]=\left[\frac{a-r}{1-ar},\frac{a+r}{1+ar}\right]$.
\end{itemize}
\end{example}

\begin{example}\label{exa:phib}
Let $b\in\mathbb{S}$ be arbitrary and let $\phi_b$ be a conformal isomorphism from $\mathbb{U}$ onto $\mathbb{S}$ such that $\phi_b(0)=b$   and $\phi_b'(0) >0$. It is straightforward to check that $\phi_b= \phi\circ \varphi_a$, where  $\displaystyle a=\tan\frac{b\pi}{4}$ and $\varphi_a$ is defined in Example \ref{exa:varphia}. Specially, for $b\in(-1,1)$, the mapping $\phi_b$ has the following properties:
\begin{itemize}
  \item[i)]  it is decreasing on $(-1,1)$ and maps $(-1,1)$ onto itself;
  \item[ii)] for $r\in[0,1)$ it holds $\displaystyle \phi_b([-r,r])=[m_b(r),M_b(r)]$, where\\
  $\displaystyle m_b(r)=\phi_b(r)=\frac{4}{\pi}\arctan{\frac{a-r}{1-ar}}$ and $\displaystyle M_b(r)=\phi_b(-r)=\frac{4}{\pi}\arctan{\frac{a+r}{1+ar}}$.
\end{itemize}
\end{example}


\section{Some properties of the strip}

Since $\varphi$ is a conformal isomorphism from $\mathbb{S}$ into $\mathbb{U}$ a simple computation gives
$$
\rho_{\mathbb{S}}(z)=\rho_{\mathbb{U}}(\varphi(z))|\varphi'(z)|=\frac{\pi}{2}\frac{1}{\cos{\left(\displaystyle\frac{\pi}{2}\re{z}\right)}}, \quad \mbox{ for all } \quad z\in\mathbb{S}.
$$
The following proposition gives us an interesting relation between the hyperbolic distance $d_{\mathbb{S}}$ and the Euclidean distance $d_e$. It turns out that this relation is crucial for some of our investigation (see Lemma~\ref{lem:hypdisk1} and Theorems \ref{th:schwhol} and \ref{th:schwhqr} below).

\begin{proposition}\label{prop:strip}
Let $z_1,z_2\in\mathbb{S}$. Then
\begin{equation}\label{strip:fla1}
d_{\mathbb{S}}(z_1,z_2)\geqslant\frac{\pi}{2}d_e(z_1,z_2).
\end{equation}
If $z_1,z_2$ are pure imaginary numbers then in (\ref{strip:fla1}) the equality holds.
\end{proposition}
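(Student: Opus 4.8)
The plan is to read off everything from the explicit density formula for the strip that was just derived, namely $\rho_{\mathbb{S}}(z)=\dfrac{\pi}{2}\,\dfrac{1}{\cos\!\left(\frac{\pi}{2}\re z\right)}$, and to notice that this density is bounded below by $\pi/2$ on all of $\mathbb{S}$. Indeed, for $z\in\mathbb{S}$ we have $-1<\re z<1$, hence $\frac{\pi}{2}\re z\in\left(-\frac{\pi}{2},\frac{\pi}{2}\right)$, so $0<\cos\!\left(\frac{\pi}{2}\re z\right)\leqslant 1$, and therefore $\rho_{\mathbb{S}}(z)\geqslant\frac{\pi}{2}$ for every $z\in\mathbb{S}$, with equality precisely on the imaginary axis $\re z=0$.

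Next I would compare hyperbolic length with Euclidean length. Let $\gamma$ be an arbitrary $C^1$ curve in $\mathbb{S}$ joining $z_1$ to $z_2$. Using the pointwise bound just established,
\begin{equation*}
\int_{\gamma}\rho_{\mathbb{S}}(z)\,|dz|\;\geqslant\;\frac{\pi}{2}\int_{\gamma}|dz|\;\geqslant\;\frac{\pi}{2}\,d_e(z_1,z_2),
\end{equation*}
where the last inequality is just the fact that the Euclidean length of any curve from $z_1$ to $z_2$ is at least $|z_1-z_2|=d_e(z_1,z_2)$. Taking the infimum over all such $\gamma$ in the definition of $d_{\mathbb{S}}$ yields $d_{\mathbb{S}}(z_1,z_2)\geqslant\frac{\pi}{2}d_e(z_1,z_2)$, which is (\ref{strip:fla1}).

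For the equality assertion, suppose $z_1$ and $z_2$ are pure imaginary. Then the straight segment $[z_1,z_2]$ lies on the imaginary axis, hence inside $\mathbb{S}$, and along it $\re z=0$, so $\rho_{\mathbb{S}}\equiv\frac{\pi}{2}$ there. Choosing this particular curve as a competitor in the definition of $d_{\mathbb{S}}$ gives
\begin{equation*}
d_{\mathbb{S}}(z_1,z_2)\;\leqslant\;\int_{[z_1,z_2]}\rho_{\mathbb{S}}(z)\,|dz|\;=\;\frac{\pi}{2}\,|z_1-z_2|\;=\;\frac{\pi}{2}\,d_e(z_1,z_2),
\end{equation*}
and combining this with (\ref{strip:fla1}) forces equality.

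I do not expect a genuine obstacle: the whole argument is a one-line length estimate once the density formula is in hand. The only points needing (minimal) care are that the lower bound $\rho_{\mathbb{S}}\geqslant\pi/2$ is valid on \emph{all} of $\mathbb{S}$, which works exactly because $\mathbb{S}$ omits the lines $\re z=\pm1$ where $\cos\!\left(\frac{\pi}{2}\re z\right)$ would vanish, and that the competitor segment in the equality case genuinely stays inside $\mathbb{S}$, which it does since it lies on the imaginary axis. One could add, as an incidental remark, that the computation above also shows the imaginary axis is a hyperbolic geodesic of $\mathbb{S}$, but this is not needed for the statement.
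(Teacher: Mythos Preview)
Your proof is correct and follows essentially the same route as the paper: both use the pointwise bound $\rho_{\mathbb{S}}\geqslant\pi/2$ to compare hyperbolic and Euclidean lengths of an arbitrary curve, then take the infimum, and for the equality case both evaluate the hyperbolic length along the straight segment on the imaginary axis where the density is identically $\pi/2$. If anything, your write-up is slightly more explicit in justifying the density bound and in combining the upper and lower estimates to conclude equality.
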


\begin{proof} Let $\gamma$ be a $C^1$ curve such that joining $z_1$ to $z_2$ in $\mathbb{S}$.  Since $\displaystyle\rho_{\mathbb{S}}(z)\geqslant\frac{\pi}{2}$ for all $z\in\mathbb{S}$, it follows that
\begin{equation}\label{strip:fla2}
      \int_{\gamma}\rho_{\mathbb{S}}(z)|dz|\geqslant\frac{\pi}{2}\int_{\gamma}|dz|.
\end{equation}
In other words the hyperbolic length of the curve $\gamma$ is great or equal to product of $\displaystyle\frac{\pi}{2}$ and Euclidean length of the curve $\gamma$. Since Euclidean length of the curve $\gamma$ is great or equal to $d_e(z_1,z_2)$ according to the inequality (\ref{strip:fla2}), we have
\begin{equation}\label{strip:fla3}
    \int_{\gamma}\rho_{\mathbb{S}}(z)|dz|\geqslant\frac{\pi}{2}d_e(z_1,z_2).
\end{equation}
Take in (\ref{strip:fla3}) infimum over all $C^1$ curves $\gamma$ joining $z_1$ to $z_2$ in $\mathbb{S}$ we obtain $\displaystyle d_{\mathbb{S}}(z_1,z_2)\geqslant\frac{\pi}{2}d_e(z_1,z_2).$

On other hand, let $y_1,y_2\in\mathbb{R}$ be arbitrary and let $\widehat{\gamma}:[0,1]\rightarrow\mathbb{S}$ be defined by $\widehat{\gamma}(t)=iy_1+i(y_2-y_1)t$. An easy computation shows that
\begin{equation}\label{strip:fla4}
   \int_{\widehat{\gamma}}\rho_{\mathbb{S}}(z)|dz|=\frac{\pi}{2}|y_2-y_1|=\frac{\pi}{2}d_e(y_1,y_2).
\end{equation}
\end{proof}

\section{Euclidean properties of hyperbolic discs}
\begin{definition}\label{def1}

Let $\lambda>0$ be arbitrary. By $D_{\lambda}(a)$ (respectively $S_{\lambda}(b)$) we denote the hyperbolic disc in $\mathbb{U}$ (respectively in $\mathbb{S}$) with hyperbolic center $a\in\mathbb{U}$ (respectively $b\in\mathbb{S}$) and hyperbolic radius $\lambda$. More precisely $D_{\lambda}(a)=\{z\in\mathbb{U}:d_{\mathbb{U}}(z,a)<\lambda\}$ and $S_{\lambda}(b)=\{z\in\mathbb{S}:d_{\mathbb{S}}(z,b)<\lambda\}$. Also, $\overline{D}_{\lambda}(a)=\{z\in\mathbb{U}:d_{\mathbb{U}}(z,a)\leqslant\lambda\}$ and $\overline{S}_{\lambda}(b)=\{z\in\mathbb{S}:d_{\mathbb{S}}(z,b)\leqslant\lambda\}$ are corresponding closed discs.  Specially, if $a=0$ (respectively $b=0$) we omit  $a$ (respectively $b$) from the notations.
\end{definition}

\begin{remark}
If $f$ is a conformal isomorphism from $\mathbb{U}$ onto $\mathbb{S}$ such that $f(a)=b$ then $f(D_{\lambda}(a))=S_{\lambda}(b)$ and $f(\overline{D}_{\lambda}(a))=\overline{S}_{\lambda}(b)$.
\end{remark}

Let $r\in(0,1)$ be arbitrary. By $U_r$ we denote Euclidean disc $\{z\in\mathbb{C}: |z|<r\}$ and by $\overline{U}_r$ we denote the corresponding closed disc. Also, let
\begin{equation*}
    \lambda(r)=d_{\mathbb{U}}(r,0)=\ln\frac{1+r}{1-r}=2\artanh{r}.
\end{equation*}
Since $\displaystyle d_{\mathbb{U}}(z,0)=\ln\frac{1+|z|}{1-|z|}=2\artanh{|z|}$ for all $z\in\mathbb{U}$, we have
\begin{equation*}
    D_{\lambda(r)}=\left\{z\in\mathbb{C}:2\artanh{|z|}<2\artanh{r}\right\}=\{z\in\mathbb{C}:|z|<r\}=U_r,
\end{equation*}
and similarly
\begin{equation*}
    \overline{D}_{\lambda(r)}=\overline{U}_r.
\end{equation*}

The closed discs $\overline{D}_{\lambda(r)}$ and $\overline{S}_{\lambda(r)}$ are shown on the Figure \ref{fig:1} and the following lemma to claim that disc $\overline{S}_{\lambda(r)}$ be contained in a Euclidean rectangle.

\begin{figure}[H]
\includegraphics[width=\linewidth]{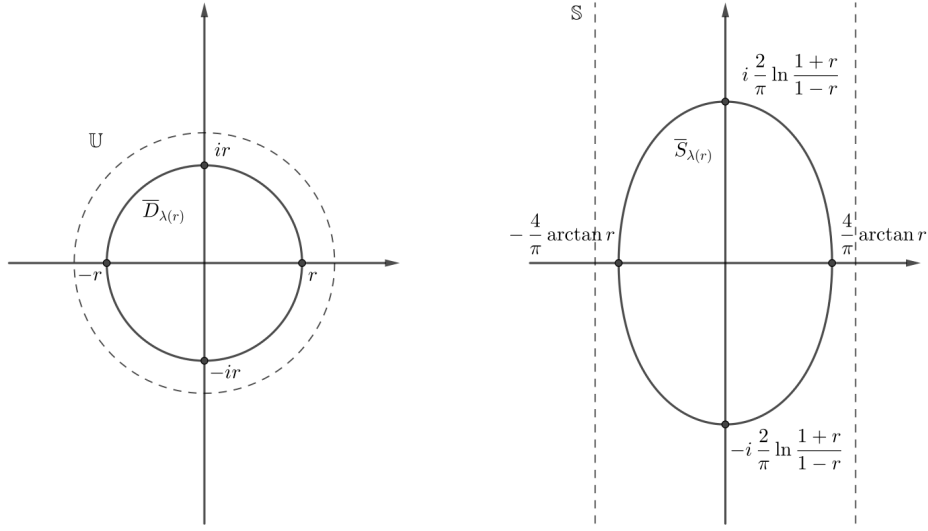}
\caption{$\overline{D}_{\lambda(r)}$ and $\overline{S}_{\lambda(r)}$}\label{fig:1}
\end{figure}

\begin{lemma}\label{lem:hypdisk}
Let $r\in(0,1)$ be arbitrary. Then
\begin{equation}\label{}
\overline{S}_{\lambda(r)}\subset\displaystyle\left[-\frac{4}{\pi}\arctan{r},\frac{4}{\pi}\arctan{r}\right]\times\left[-\frac{2}{\pi}\lambda(r),\frac{2}{\pi}\lambda(r)\right].
\end{equation}
In particular,
\begin{equation}\label{hypdisk:fla0}
    R_{e}(\overline{S}_{\lambda(r)})=\displaystyle\left[-\frac{4}{\pi}\arctan{r},\frac{4}{\pi}\arctan{r}\right].
\end{equation}

\end{lemma}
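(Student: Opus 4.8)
The plan is to realise $\overline{S}_{\lambda(r)}$ explicitly as an image under $\phi$ and then to follow the Euclidean disc $\overline{U}_r$ through the four elementary factors of Example~\ref{exa:phi}. Since $\phi$ is a conformal isomorphism of $\mathbb{U}$ onto $\mathbb{S}$ with $\phi(0)=0$, it carries $\overline{D}_{\lambda(r)}$ onto $\overline{S}_{\lambda(r)}$ (cf.\ the Remark following Definition~\ref{def1}), and we have already observed that $\overline{D}_{\lambda(r)}=\overline{U}_r$; hence $\overline{S}_{\lambda(r)}=\phi(\overline{U}_r)$, and it is enough to bound $\re$ and $\im$ on $\phi(\overline{U}_r)$.

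Next I would push $\overline{U}_r$ through the chain $\phi=\phi_4\circ\phi_3\circ\phi_2\circ\phi_1$. The rotation $\phi_1(z)=iz$ fixes $\overline{U}_r$ as a set, and the M\"obius map $\phi_2(z)=\frac{1+z}{1-z}$, which has real coefficients and maps circles to circles, sends $\overline{U}_r$ onto the closed Euclidean disc $K$, symmetric about $\mathbb{R}$, whose diameter is $[\phi_2(-r),\phi_2(r)]=\bigl[\frac{1-r}{1+r},\frac{1+r}{1-r}\bigr]$; equivalently $K$ has centre $c=\frac{1+r^2}{1-r^2}$ and radius $\rho=\frac{2r}{1-r^2}$. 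The key elementary identity here is $c^2-\rho^2=1$. It yields $c-\rho=\frac{1-r}{1+r}>0$, so $K\subset\{\re w>0\}$ and the branch of $\ln$ prescribed in Example~\ref{exa:phi} is defined on all of $K$; moreover every $w\in K$ satisfies $\frac{1-r}{1+r}\le|w|\le\frac{1+r}{1-r}$ (the nearest and farthest points of $K$ from the origin lie on the real axis) and $|\arg w|\le\arcsin\frac{\rho}{c}=\arcsin\frac{2r}{1+r^2}=2\arctan r$ (from the tangent lines through the origin to $\partial K$). For $z\in\overline{U}_r$ set $w=\phi_2(\phi_1(z))\in K$ and $\zeta=\phi(z)$; then $\zeta=\phi_4(\ln w)=-\frac{2i}{\pi}\bigl(\ln|w|+i\arg w\bigr)=\frac{2}{\pi}\arg w-\frac{2i}{\pi}\ln|w|$, so the two bounds above give $\re\zeta=\frac{2}{\pi}\arg w\in\bigl[-\frac{4}{\pi}\arctan r,\frac{4}{\pi}\arctan r\bigr]$ and, since $|\ln|w||\le\ln\frac{1+r}{1-r}=\lambda(r)$, $\im\zeta=-\frac{2}{\pi}\ln|w|\in\bigl[-\frac{2}{\pi}\lambda(r),\frac{2}{\pi}\lambda(r)\bigr]$. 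This is precisely the asserted rectangle.

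For the sharper statement~(\ref{hypdisk:fla0}) I would use Example~\ref{exa:phib} with $b=0$ (so $a=0$ and $\phi_0=\phi$), which gives $\phi([-r,r])=\bigl[-\frac{4}{\pi}\arctan r,\frac{4}{\pi}\arctan r\bigr]$. Since $[-r,r]\subset\overline{U}_r$, this whole real segment lies in $\phi(\overline{U}_r)=\overline{S}_{\lambda(r)}$, and applying $R_e$ (which is the identity on real numbers) shows $\bigl[-\frac{4}{\pi}\arctan r,\frac{4}{\pi}\arctan r\bigr]\subseteq R_e(\overline{S}_{\lambda(r)})$; the rectangle inclusion just proved gives the reverse containment, and equality follows. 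I expect the only point requiring care to be the book-keeping for the disc $K$: verifying $c^2-\rho^2=1$, evaluating $\arcsin\frac{2r}{1+r^2}=2\arctan r$, and checking that $K$ remains in the right half-plane so that the prescribed branch of the logarithm applies. Once these elementary facts are in hand, the rest is merely composing the known behaviour of the factors $\phi_1,\phi_2,\phi_3,\phi_4$.
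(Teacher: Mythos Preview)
Your proof is correct and follows essentially the same route as the paper: both arguments identify $\overline{S}_{\lambda(r)}=\phi(\overline{U}_r)$, push $\overline{U}_r$ through the factorisation $\phi=\phi_4\circ\phi_3\circ\phi_2\circ\phi_1$ to the disc $K$ with centre $c=\frac{1+r^2}{1-r^2}$ and radius $\rho=\frac{2r}{1-r^2}$, and then read off the bounds on $\arg w$ and $\ln|w|$ from the elementary geometry of $K$. The only cosmetic difference is that you compute the extremal argument via $\sin\theta_0=\rho/c$ while the paper uses the tangent-point/power-of-a-point relation $|n_{\theta_0}|=1$ to get $\tan\theta_0=\rho$; both yield $\theta_0=2\arctan r$.
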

\begin{proof} Since $\overline{S}_{\lambda(r)}=\phi(\overline{U}_r)$, where $\phi$ is defined in Example \ref{exa:phi}, it is sufficient to show that
\begin{equation}\label{lemhypdisk:fla1}
    \max\{|\re{\phi(z)}|:z\in \overline{U}_r\}=\frac{4}{\pi}\arctan{r}
\end{equation}
and
\begin{equation}\label{lemhypdisk:fla2}
    \max\{|\im{\phi(z)}|:z\in \overline{U}_r\}=\frac{2}{\pi}\lambda(r).
\end{equation}
Let $\phi_1$, $\phi_2$, $\phi_3$ and $\phi_4$ be as in Example \ref{exa:phi}. It is easy to check that $\phi_2\circ\phi_1$ maps  $\partial U_r$ onto $l_r$, where $l_r$ is circle with center $\displaystyle c=\frac{1+r^2}{1-r^2}$ and radius $\displaystyle R=\frac{2r}{1-r^2}$. Also, for all $z\in l_r$ we have $\re(\phi_4(\phi_3(z)))=\displaystyle\frac{2}{\pi}\arg{z}$\footnote{Here $\arg$ is imaginary part of $\ln$, where $\ln$ is branch of logarithm  defined on $\{z\in\mathbb{C}:\re{z}>0\}$ and determined by $\ln{1}=0$. It is evident that values of $\arg$ belong to the interval $\displaystyle\left(-\frac{\pi}{2},\frac{\pi}{2}\right)$.} and $\im(\phi_4(\phi_3(z)))=-\displaystyle\frac{2}{\pi}\ln{|z|}$.

Set $\theta_0=\max\{|\arg{z}|:z\in l_r\}$ and $L_0=\max\{|\ln{|z|}|:z\in l_r\}$.

Let's look at Figure \ref{fig:2}.  It is clear that line $y=(\tan{\theta_0})x$ is a tangent from the point $0$ on the circle $l_r$ and denote by $n_{\theta_0}$ the point of tangency. Also, note that $l_r$ intersect the $x-$axis at the points $\displaystyle c-R=\frac{1-r}{1+r}$ and $\displaystyle c+R=\frac{1+r}{1-r}$ which are reciprocal numbers. Thus, the power of the point $0$ with respect to the circle $l_r$ is equal $1$ and therefore $|n_{\theta_0}|=1$.  Now, it is obviously that $\tan{\theta_0}=R$ and therefore
\begin{equation}\label{lemhypdisk:fla3}
    \theta_0=\arctan{R}=\arctan{\frac{2r}{1-r^2}}=2\arctan{r}.
\end{equation}
Further, since $\ln|n_{\theta_0}|=1$ it is easy seen that $L_0=\max\{-\ln(c-R),\ln(c+R)\}$. But, since $(c-R)(c+R)=1$ it follows that $\ln(c+R)=-\ln(c-R)$ and therefore
\begin{equation}\label{lemhypdisk:fla4}
    L_0=\ln(c+R)=-\ln(c-R)=\ln{\frac{1+r}{1-r}}.
\end{equation}
From (\ref{lemhypdisk:fla3}) and (\ref{lemhypdisk:fla4}) we can get the equalities (\ref{lemhypdisk:fla1}) and (\ref{lemhypdisk:fla2}) (the details are left to the reader).

\begin{figure}[H]
\includegraphics[width=0.75\linewidth]{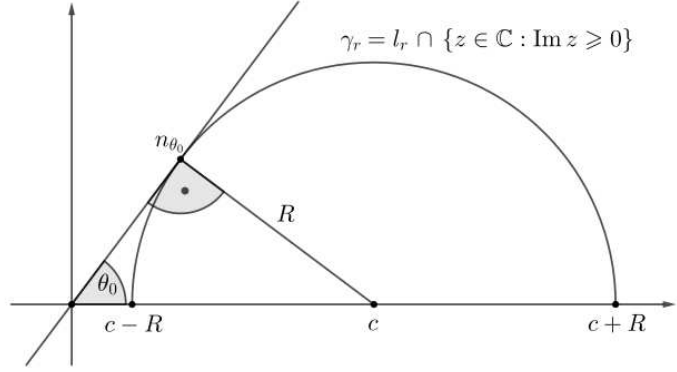}
\caption{Angle $\theta_0$.}\label{fig:2}
\end{figure}
\end{proof}

In order  to appreciate the  proof of the next lemma we give an example.
For $s>0$ set   $R^s= [-1,1]\times [-s,s]$ and  let $\psi^s$ be conformal   mapping of $\mathbb{U}$  onto  $R^s$   such that  $\psi^s$   maps  $(-i,i)$ onto $(-is,is)$  with
$\psi^s(0)=0$.  Also, for $r\in(0,1)$ set $\overline{E}_{r,s} = \psi^s(\overline{U}_r)$. We leave  to the interested  reader to check  that for $r$  enough near to $1$   the function $e$ on
$\overline{E}_{r,s}$  does not attain maximum at  $\psi^s(ir)$.
\begin{lemma}\label{lem:hypdisk1}
Let $\lambda>0$ be arbitrary. Then
\begin{equation}\label{}
    \max\{d_e(z,0):z\in\overline{S}_{\lambda}\}=\frac{2}{\pi}\lambda.
\end{equation}
\end{lemma}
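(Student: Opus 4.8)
The plan is to exploit the explicit conformal picture already set up for Lemma \ref{lem:hypdisk}. Recall that $\overline{S}_\lambda = \overline{S}_{\lambda(r)} = \phi(\overline{U}_r)$ where $r\in(0,1)$ is chosen so that $\lambda = \lambda(r) = 2\artanh r$; equivalently $r = \tanh(\lambda/2)$. So the claim to prove becomes
\begin{equation*}
\max\{|\phi(z)| : z\in\overline{U}_r\} = \frac{2}{\pi}\lambda(r).
\end{equation*}
From Lemma \ref{lem:hypdisk} we already know $\overline{S}_{\lambda(r)}$ is contained in the rectangle $\left[-\tfrac{4}{\pi}\arctan r,\tfrac{4}{\pi}\arctan r\right]\times\left[-\tfrac{2}{\pi}\lambda(r),\tfrac{2}{\pi}\lambda(r)\right]$, and that the point $\phi(ir)$ (the image of $z=ir$, which is where $\im\phi$ attains its maximum by the computation $\im(\phi_4\phi_3(z)) = -\tfrac{2}{\pi}\ln|z|$ and $|n_{\theta_0}|=1$) is purely imaginary and equal to $i\tfrac{2}{\pi}\lambda(r)$. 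Indeed, tracing $ir$ through $\phi_1,\phi_2$ gives $\phi_2(\phi_1(ir)) = \frac{1-r}{1+r}$, a positive real, so $\phi_3$ sends it to $\ln\frac{1-r}{1+r} = -\lambda(r)$, real, and $\phi_4$ multiplies by $-i\tfrac{2}{\pi}$, giving $i\tfrac{2}{\pi}\lambda(r)$. Hence the maximum of $d_e(\cdot,0)$ over $\overline{S}_\lambda$ is at least $\tfrac{2}{\pi}\lambda$, and the whole content is the reverse inequality: no point of $\overline{S}_\lambda$ is Euclidean-farther from $0$ than this purely imaginary boundary point.

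For the upper bound I would argue as follows. Let $w\in\overline{S}_\lambda$; then $d_{\mathbb{S}}(w,0)\le\lambda$. By Proposition \ref{prop:strip}, $d_{\mathbb{S}}(w,0)\ge\tfrac{\pi}{2}d_e(w,0)$, so $d_e(w,0)\le\tfrac{2}{\pi}d_{\mathbb{S}}(w,0)\le\tfrac{2}{\pi}\lambda$. That is exactly the desired bound, and it is clean. Combined with the lower bound from the previous paragraph (the point $i\tfrac{2}{\pi}\lambda$ lies on $\partial S_\lambda$ because $d_{\mathbb{S}}(i\tfrac{2}{\pi}\lambda,0) = \tfrac{\pi}{2}\cdot\tfrac{2}{\pi}\lambda = \lambda$, using the equality case of Proposition \ref{prop:strip} for purely imaginary points), we get
\begin{equation*}
\max\{d_e(z,0):z\in\overline{S}_\lambda\} = \frac{2}{\pi}\lambda.
\end{equation*}
In fact this shows the maximum is attained precisely at $z = \pm i\tfrac{2}{\pi}\lambda$.

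The only subtlety — and this is where I expect to spend a sentence of care rather than real effort — is making sure the maximum is actually attained, i.e. that $\overline{S}_\lambda$ is compact so that "$\max$" is legitimate; this is immediate since $\overline{S}_\lambda = \phi(\overline{U}_r)$ is the continuous image of a compact set, and it is bounded by Lemma \ref{lem:hypdisk}. Note that this route makes no use of the heuristic warning in the paragraph preceding the lemma about the rectangle $R^s$: the point is that for the strip $\mathbb{S}$ the hyperbolic metric is exactly $\tfrac{\pi}{2}$ times Euclidean along the imaginary axis and strictly larger elsewhere, which forces the Euclidean-extremal point of a hyperbolic disc centered at $0$ onto the imaginary axis — unlike the rectangle case where the geometry near the short sides distorts this. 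So the main "obstacle" is really just recognizing that Proposition \ref{prop:strip} does all the work, with the equality case pinning down the extremizer.
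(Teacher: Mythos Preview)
Your proof is correct and follows essentially the same route as the paper: the upper bound comes directly from Proposition~\ref{prop:strip}, and attainment at $z_0=\pm i\tfrac{2}{\pi}\lambda$ follows from the equality case of that proposition for purely imaginary points. The explicit tracing of $ir$ through $\phi_1,\dots,\phi_4$ and the compactness remark are harmless extras, but the paper dispenses with both since Proposition~\ref{prop:strip} alone already certifies that $z_0\in\overline{S}_\lambda$.
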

\begin{proof}
Let $z\in\overline{S}_{\lambda}$ be arbitrary. By Proposition \ref{prop:strip} and since $z\in\overline{S}_{\lambda}$ we have
\begin{equation}\label{}
    d_{e}(z,0)\leqslant\frac{2}{\pi}d_{\mathbb{S}}(z,0)\leqslant\frac{2}{\pi}\lambda
\end{equation}
It remains to show that there exists a $z_0\in\overline{S}_{\lambda}$ such that $\displaystyle d_e(z_0,0)=\frac{2}{\pi}\lambda$. Let $\displaystyle z_0=i\frac{2}{\pi}\lambda$ or $\displaystyle z_0=-i\frac{2}{\pi}\lambda$. Then it is clear that $\displaystyle d_e(z_0,0)=\frac{2}{\pi}\lambda$ and by Proposition \ref{prop:strip} we have $d_{\mathbb{S}}(z_0,0)=\lambda$, i.e. $z_0\in\overline{S}_{\lambda}$.

Recall that the above   proof is based on the hyperbolic geometry of  the strip. The reader can try to get a direct  analytic proof without appeal to the geometry.
\end{proof}

\begin{lemma}\label{lem:hypdisk0}
Let $r\in(0,1)$ and $b\in(-1,1)$ be arbitrary. Then
\begin{equation*}
    R_{e}(\overline{S}_{\lambda(r)}(b))=[m_b(r),M_b(r)],
\end{equation*}
where $m_b$ and $M_b$ are defined in Example \ref{exa:phib}.
\end{lemma}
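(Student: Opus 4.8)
The plan is to realise $\overline{S}_{\lambda(r)}(b)$ as the image of $\overline{U}_r$ under a concrete conformal isomorphism and then study a harmonic function on $\overline{U}_r$. By Example~\ref{exa:phib}, $\phi_b=\phi\circ\varphi_a$ with $a=\tan\frac{b\pi}{4}\in(-1,1)$ is a conformal isomorphism of $\mathbb{U}$ onto $\mathbb{S}$ with $\phi_b(0)=b$; since $\overline{D}_{\lambda(r)}=\overline{U}_r$, the Remark after Definition~\ref{def1} (with $a=0$) gives $\overline{S}_{\lambda(r)}(b)=\phi_b(\overline{U}_r)$. Hence $R_e(\overline{S}_{\lambda(r)}(b))$ is the image of the compact connected set $\overline{U}_r$ under the continuous map $z\mapsto\re\phi_b(z)$, so it is a closed interval. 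Since $\pm r\in\overline{U}_r$ and, again by Example~\ref{exa:phib}, $\phi_b(r)=m_b(r)$ and $\phi_b(-r)=M_b(r)$ are real with $m_b(r)<M_b(r)$, it suffices to show that $\re\phi_b$ attains its minimum over $\overline{U}_r$ at $z=r$ and its maximum at $z=-r$.

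As $\re\phi_b$ is harmonic on $\overline{U}_r$, the maximum principle reduces this to the circle $\partial U_r$, on which I would compute $\re\phi_b$ explicitly. Proceeding as in the proof of Lemma~\ref{lem:hypdisk} with $\phi=\phi_4\circ\phi_3\circ\phi_2\circ\phi_1$, one gets $\phi(w)=-\frac{2i}{\pi}\ln\frac{1+iw}{1-iw}$, hence $\re\phi(w)=\frac{2}{\pi}\arg\frac{1+iw}{1-iw}=\frac{2}{\pi}\arctan\frac{2\re w}{1-|w|^2}$ for $w\in\mathbb{U}$. Writing $\varphi_a(z)=\frac{a-z}{1-az}$ and using the identities $|1-az|^2-|a-z|^2=(1-a^2)(1-|z|^2)$ and $\re\big((a-z)(1-a\bar z)\big)=a(1+|z|^2)-(1+a^2)\re z$, this becomes
\begin{equation*}
\re\phi_b(z)=\frac{2}{\pi}\arctan\frac{2\big(a(1+|z|^2)-(1+a^2)\re z\big)}{(1-a^2)(1-|z|^2)},\qquad z\in\mathbb{U}.
\end{equation*}
On $\partial U_r$ we have $|z|^2=r^2$, so the right-hand side is a strictly decreasing affine function of $\re z$; since $\re z$ runs over $[-r,r]$ on $\partial U_r$, attaining $r$ only at $z=r$ and $-r$ only at $z=-r$, the minimum of $\re\phi_b$ on $\partial U_r$ is at $z=r$ and its maximum at $z=-r$.

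Finally one verifies the two boundary values. Factoring $a(1+r^2)-(1+a^2)r=(a-r)(1-ar)$ and using $(1-ar)^2-(a-r)^2=(1-a^2)(1-r^2)$ together with the tangent double-angle formula (legitimate since $\big|\frac{a-r}{1-ar}\big|=|\varphi_a(r)|<1$), one gets $\re\phi_b(r)=\frac{2}{\pi}\arctan\frac{2(a-r)(1-ar)}{(1-ar)^2-(a-r)^2}=\frac{4}{\pi}\arctan\frac{a-r}{1-ar}=m_b(r)$; replacing $r$ by $-r$ gives $\re\phi_b(-r)=\frac{4}{\pi}\arctan\frac{a+r}{1+ar}=M_b(r)$, consistently with Example~\ref{exa:phib}. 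Combined with the first paragraph this yields $R_e(\overline{S}_{\lambda(r)}(b))=[m_b(r),M_b(r)]$. I do not anticipate a genuine obstacle: the only points needing care are the branch of $\arg$ in the formula for $\re\phi$ (harmless, since $\re\phi_b\in(-1,1)$ forces $\arg\frac{1+iw}{1-iw}\in(-\frac{\pi}{2},\frac{\pi}{2})$, so no correction terms enter the $\arctan$ identities) and the elementary algebra in the two displayed identities. A more geometric route via the conjugation symmetry of $\overline{S}_{\lambda(r)}(b)$, which fixes each vertical line $\re w=\mathrm{const}$ and hence places the extrema of $\re$ on $\mathbb{R}\cap\overline{S}_{\lambda(r)}(b)=\phi_b([-r,r])=[m_b(r),M_b(r)]$, is possible, but making it rigorous still requires the harmonic maximum principle, so the computational argument above seems the most economical.
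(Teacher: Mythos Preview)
Your proof is correct and follows a genuinely different route from the paper's. After the common first step $\overline{S}_{\lambda(r)}(b)=\phi_b(\overline{U}_r)$, the paper argues geometrically: it observes that $\overline{S}_{\lambda(r)}(b)$ is symmetric with respect to the real axis and, citing a result of Beardon--Minda, Euclidean convex; symmetry plus convexity then forces $R_e(\overline{S}_{\lambda(r)}(b))=\overline{S}_{\lambda(r)}(b)\cap\mathbb{R}=\phi_b([-r,r])=[m_b(r),M_b(r)]$ via Example~\ref{exa:phib}. You instead apply the maximum principle to the harmonic function $\re\phi_b$ on $\overline{U}_r$ and compute explicitly on $\partial U_r$. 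Your approach is more computational but entirely self-contained, avoiding the external convexity theorem; the paper's approach is shorter and more conceptual but imports that result. One small correction to your closing remark: the symmetry route does \emph{not} require the harmonic maximum principle to be made rigorous --- symmetry alone is indeed insufficient (it does not place the extrema of $\re$ on the real axis), but it is convexity, not the maximum principle, that closes the gap, exactly as in the paper's argument.
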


\begin{proof}
Let's repeat that $\overline{D}_{\lambda(r)}=\overline{U}_r$ and $\overline{S}_{\lambda(r)}(b)=\phi_{b}(\overline{D}_{\lambda(r)})=\phi_{b}(\overline{U}_{r})$. Further, one can show that
\begin{equation}\label{hypdisk0:symmetric}
    \overline{S}_{\lambda(r)}(b) \mbox{ is symmetric with respect to the $x$-axis}.
\end{equation}
Also, by  \cite[Theorem 7.11]{BeardonMinda}
\begin{equation}\label{hypdisk0:convex}
    \overline{S}_{\lambda(r)}(b) \mbox{ is Euclidean convex}.
\end{equation}
Now, from (\ref{hypdisk0:symmetric}), (\ref{hypdisk0:convex}) and parts i) and ii) in Example \ref{exa:phib} the lemma follows.
\end{proof}

\section{The Schwarz lemma for harmonic functions from $\mathbb{U}$ into $(-1,1)$}

In this section we first give a simple proof of the classical Schwarz lemma
for harmonic functions and then use the same method we prove a new version (Theorem \ref{th:schwhar1}).

\begin{theorem}[\cite{heinz},{\cite[p. 77]{duren}}]\label{th:schwhar}
Let $u:\mathbb{U}\rightarrow(-1,1)$ be harmonic function such that $u(0)=0$. Then
\begin{equation}\label{schwhar:fla1}
    |u(z)|\leqslant\frac{4}{\pi}\arctan{|z|}, \quad \mbox{ for all } \quad z\in\mathbb{U},
\end{equation}
and this inequality is sharp for each point $z\in\mathbb{U}$.
\end{theorem}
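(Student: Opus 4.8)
The plan is to combine the subordination principle—recorded as item (V) above—with the explicit description of $R_e(\overline{S}_{\lambda(r)})$ given by formula (\ref{hypdisk:fla0}) of Lemma~\ref{lem:hypdisk}. The essential point is that a real valued harmonic function into $(-1,1)$ is the real part of a holomorphic map into $\mathbb{S}$, so the hyperbolic Schwarz--Pick machinery applies.

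First I would pass from $u$ to an associated holomorphic function. By the standard fact recalled in (III), there is $F\in\Hol(\mathbb{U},\mathbb{S})$ with $\re F=u$ on $\mathbb{U}$; since $F$ is determined up to an additive purely imaginary constant and $\re F(0)=u(0)=0$ forces $F(0)$ to be purely imaginary, after subtracting $F(0)$ I may assume $F(0)=0$ without changing $\re F=u$.

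Next, fix $z\in\mathbb{U}$ and put $r=|z|$, so that $\overline{U}_r=\overline{D}_{\lambda(r)}$. Applying the Schwarz--Pick lemma for holomorphic maps from $\mathbb{U}$ into $\mathbb{S}$ (Theorem~\ref{th:schwpick}, inequality (\ref{schwpick:fla2}), equivalently item (V)), every $w\in\overline{U}_r$ satisfies
\begin{equation*}
d_{\mathbb{S}}(F(w),0)=d_{\mathbb{S}}(F(w),F(0))\leqslant d_{\mathbb{U}}(w,0)\leqslant\lambda(r),
\end{equation*}
that is, $F(\overline{U}_r)\subset\overline{S}_{\lambda(r)}$. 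In particular, by (\ref{hypdisk:fla0}),
\begin{equation*}
u(z)=\re F(z)\in R_e(\overline{S}_{\lambda(r)})=\left[-\frac{4}{\pi}\arctan r,\frac{4}{\pi}\arctan r\right],
\end{equation*}
which is exactly the desired bound (\ref{schwhar:fla1}).

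Finally, for sharpness at a prescribed $z\in\mathbb{U}\setminus\{0\}$ (the case $z=0$ being trivial) I would write down the extremal function explicitly. From the proof of Lemma~\ref{lem:hypdisk}, the value $\frac{4}{\pi}\arctan r$ is attained: there is $w_0$ with $|w_0|=r$ and $\re\phi(w_0)=\frac{4}{\pi}\arctan r$ (namely $w_0=(\phi_{2}\circ\phi_{1})^{-1}(n_{\theta_0})$). Setting $\alpha=w_0/z$, so $|\alpha|=1$, and $u(w)=\re\phi(\alpha w)$, one checks that $u$ is harmonic, maps $\mathbb{U}$ into $(-1,1)$, $u(0)=\re\phi(0)=0$, and $u(z)=\re\phi(w_0)=\frac{4}{\pi}\arctan|z|$, so equality holds at $z$. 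I do not expect a genuine obstacle: the only steps demanding care are the normalization $F(0)=0$ and reading off the bound from (\ref{hypdisk:fla0}) via the inclusion $F(\overline{D}_{\lambda(r)})\subset\overline{S}_{\lambda(r)}$; once Lemma~\ref{lem:hypdisk} is available everything else is routine.
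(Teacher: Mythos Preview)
Your proof is correct and follows essentially the same route as the paper: pass to a holomorphic $F:\mathbb{U}\to\mathbb{S}$ with $F(0)=0$ and $\re F=u$, obtain $F(\overline{U}_r)\subset\overline{S}_{\lambda(r)}$, and read off the bound from (\ref{hypdisk:fla0}). The only cosmetic differences are that the paper phrases the inclusion via the subordination principle $f(\overline{U}_r)\subset\phi(\overline{U}_r)$ rather than the Schwarz--Pick distance inequality (it explicitly remarks these are interchangeable), and for sharpness it writes the extremal directly as $\widehat u(\zeta)=\re\phi(e^{-i\arg z}\zeta)$, using $\phi(r)=\tfrac{4}{\pi}\arctan r$---which is just your construction with $w_0=r$.
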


\begin{proof}
Let $z\in\mathbb{U}$ be arbitrary and $r=|z|$. Since $\mathbb{U}$ is simply connected it is well known that there exists $f\in\Hol(\mathbb{U},\mathbb{S})$ such that $u=\re{f}$ and $f(0)=0$. By subordination principle we have $f(\overline{U}_r)\subset\phi(\overline{U}_r)$, where $\phi$ is mapping defined in Example \ref{exa:phi}. Now, since
$\overline{U}_r=\overline{D}_{\lambda(r)}$ and since $\overline{S}_{\lambda(r)}=\phi(\overline{D}_{\lambda(r)})$  by Lemma \ref{lem:hypdisk} we obtain
$u(\overline{U}_r)\subset\displaystyle\left[-\frac{4}{\pi}\arctan{r},\frac{4}{\pi}\arctan{r}\right]$ and the inequality (\ref{schwhar:fla1}) follows.

If  $z=0$ it is clear  that in  (\ref{schwhar:fla1}) the equality holds.
In order to show that the inequality (\ref{schwhar:fla1}) is sharp and for  $z\neq0$, we  define  function  $\widehat{u}:\mathbb{U}\rightarrow(-1,1)$ on the following way $\widehat{u}(\zeta)=(\re{\phi})(e^{-i\arg{z}}\zeta)$\footnote{Here values of $\arg$ belong to the interval $\displaystyle[0,2\pi)$.}, where  $\phi$ is mapping defined in Example \ref{exa:phi}. Note that function $\widehat{u}$ depend on the point $z$. It immediately  follows that $\widehat{u}$ is harmonic function and  $\widehat{u}(0)=0$. A simple computation gives
\begin{equation*}
    |\widehat{u}(z)|=|(\re{\phi})(e^{-i\arg{z}}z)|=|(\re{\phi})(|z|)|=\frac{2}{\pi}\arctan{\frac{2\re|z|}{1-|z|^2}}=\frac{4}{\pi}\arctan{|z|}.
\end{equation*}
%
\end{proof}
We leave   to the interested  reader  to elaborate  proofs of Theorems \ref{th:schwhar} and \ref{th:schwhol}  using  the  Schwarz-Pick lemma (as in Lemma \ref{lem:hqr})  instead of the subordination principle.

\begin{remark}
Using  the  rotation  Theorem \ref{th:schwharcmplx}, stated in the introduction,  follows easily from  Theorem \ref{th:schwhar}.  For details see \cite[p. 77]{duren} cf. also \cite{heinz}.
\end{remark}

\begin{theorem}\label{th:schwhar1}
Let $u:\mathbb{U}\rightarrow(-1,1)$ be harmonic function such that $u(0)=b$ and let $m_b$ and $M_b$ be defined in Example \ref{exa:phib}. Then
\begin{equation}\label{eqX}
m_b(|z|)\leqslant u(z)\leqslant  M_b(|z|), \quad \mbox{for all} \quad z\in\mathbb{U},
\end{equation}
and this inequality is sharp for each point $z\in\mathbb{U}$.
\end{theorem}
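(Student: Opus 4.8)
The plan is to mimic the proof of Theorem \ref{th:schwhar}, replacing Lemma \ref{lem:hypdisk} by Lemma \ref{lem:hypdisk0} and using the conformal map $\phi_b$ from Example \ref{exa:phib} in place of $\phi$. Fix $z\in\mathbb{U}$ and put $r=|z|$. Since $\mathbb{U}$ is simply connected, there is $f\in\Hol(\mathbb{U},\mathbb{S})$ with $\re f=u$; by composing with a conformal automorphism of $\mathbb{S}$ that fixes the real axis (equivalently, by adjusting $f$ by an imaginary constant) we may arrange $f(0)=b$, since $u(0)=b\in(-1,1)$. By the subordination principle (item (V), which is the Schwarz–Pick lemma for holomorphic maps $\mathbb{U}\to\mathbb{S}$), the image $f(\overline{U}_r)$ lies in the hyperbolic disc of $\mathbb{S}$ with hyperbolic center $b$ and hyperbolic radius $\lambda(r)$, that is $f(\overline{U}_r)\subset\overline{S}_{\lambda(r)}(b)$. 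Applying $R_e$ and invoking Lemma \ref{lem:hypdisk0} gives $u(\overline{U}_r)=R_e(f(\overline{U}_r))\subset R_e(\overline{S}_{\lambda(r)}(b))=[m_b(r),M_b(r)]$, which is exactly the inequality \eqref{eqX}.

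For sharpness, I would exhibit, for each fixed $z\neq 0$ (the case $z=0$ being trivial since $m_b(0)=M_b(0)=b$), two harmonic functions realizing the two bounds. As in the proof of Theorem \ref{th:schwhar}, set $\widehat u(\zeta)=(\re\phi_b)(e^{-i\arg z}\zeta)$ and $\widetilde u(\zeta)=(\re\phi_b)(-e^{-i\arg z}\zeta)$; both are harmonic, map $\mathbb{U}$ into $(-1,1)$, and take the value $b$ at the origin. A direct computation, using that $\phi_b$ restricted to $(-1,1)$ is decreasing with $\phi_b(r)=m_b(r)$ and $\phi_b(-r)=M_b(r)$ (Example \ref{exa:phib}(ii)), gives $\widehat u(z)=(\re\phi_b)(|z|)=\phi_b(r)=m_b(r)$ and $\widetilde u(z)=(\re\phi_b)(-|z|)=\phi_b(-r)=M_b(r)$, so both bounds in \eqref{eqX} are attained.

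The main obstacle, and the only place where real care is needed, is the normalization $f(0)=b$ at the beginning: for harmonic $u$ one cannot postcompose with a disc automorphism of the codomain (this is precisely the difficulty flagged in the introduction), but here the codomain is the strip $\mathbb{S}$ and the relevant adjustment is only by a purely imaginary additive constant, which does preserve harmonicity and does not change $\re f=u$. Once this is in place, everything reduces to the two ingredients already proved: the Schwarz–Pick/subordination statement (V) and the Euclidean description of the shifted hyperbolic disc in Lemma \ref{lem:hypdisk0}. I would also remark that the extremal functions $\widehat u$ and $\widetilde u$ are in fact boundary values of the appropriate rotations of $\re\phi_b$, consistent with the extremal maps appearing in Theorems \ref{th:schwharcmplx} and \ref{th:schwhar}.
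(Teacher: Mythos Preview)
Your proposal is correct and follows essentially the same approach as the paper, which merely states that the proof is analogous to that of Theorem~\ref{th:schwhar} with $\phi_b$ and Lemma~\ref{lem:hypdisk0} replacing $\phi$ and Lemma~\ref{lem:hypdisk}. You have in fact supplied more detail than the paper does, including the normalization $f(0)=b$ via an additive imaginary constant and explicit extremal functions for both bounds.
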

\begin{proof}
The proof is analogously to the proof of  Theorem \ref{th:schwhar}. Whereby, instead of the mapping $\phi$ defined in Example \ref{exa:phi} and Lemma \ref{lem:hypdisk} should be used the mapping $\phi_b$ defined in Example \ref{exa:phib} and Lemma \ref{lem:hypdisk0}.
\end{proof}

\section{The Schwarz lemma for holomorphic maps from $\mathbb{U}$ into $\mathbb{S}$}

Theorem \ref{th:schwharcmplx} is usually considered as harmonic version of Theorem \ref{th:schwclassic}. In analogy with Theorems \ref{th:schwclassic} and \ref{th:schwharcmplx} we prove  the next results (Theorems  \ref{th:schwhol} and \ref{th:schwhqr}). Whereby, the  codomain $\mathbb{U}$ and the  function $\arctan$ are replaced by the strip $\mathbb{S}$ and the  function $\artanh$,   respectively.
For $K=1$ Theorem \ref{th:schwhqr} is reduced to Theorem \ref{th:schwhol}.

\begin{theorem}[The Schwarz lemma for holomorphic maps from $\mathbb{U}$ into $\mathbb{S}$]\label{th:schwhol}
Let $f\in\Hol(\mathbb{U},\mathbb{S})$ and $f(0)=0$. Then
\begin{equation}\label{schwhol:ineq1}
    |f(z)|\leqslant\frac{4}{\pi}\artanh{|z|}, \quad \mbox{ for all } \quad z\in\mathbb{U}.
\end{equation}
The inequality (\ref{schwhol:ineq1}) is sharp for each point $z\in\mathbb{U}$. Also,
\begin{equation}\label{schwhol:ineq2}
    |f'(0)|\leqslant\frac{4}{\pi}.
\end{equation}
In (\ref{schwhol:ineq2}) the equality holds if and only if $f(z)=\phi(\alpha z)$, where $\alpha\in\mathbb{C}$ such that $|\alpha|=1$, and $\phi$ is mapping defined in Example \ref{exa:phi}.
\end{theorem}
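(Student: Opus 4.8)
The plan is to reduce Theorem~\ref{th:schwhol} to the Euclidean geometry of hyperbolic discs in $\mathbb{S}$ that has already been worked out, exactly in parallel with the proof of Theorem~\ref{th:schwhar}. First, fix $z\in\mathbb{U}$ and put $r=|z|$. Since $f\in\Hol(\mathbb{U},\mathbb{S})$ with $f(0)=0$, the subordination principle (equivalently, statement (V) in the introduction, which is the Schwarz--Pick lemma Theorem~\ref{th:schwpick} specialized to $\mathbb{U}\to\mathbb{S}$) gives $f(\overline{D}_{\lambda(r)})\subset \overline{S}_{\lambda(r)}$, because $\overline{D}_{\lambda(r)}=\overline{U}_r$ and $f(0)=0$ is the hyperbolic center. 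Then I would invoke Lemma~\ref{lem:hypdisk1}, which says $\max\{d_e(w,0):w\in\overline{S}_{\lambda(r)}\}=\tfrac{2}{\pi}\lambda(r)$. Since $\lambda(r)=2\artanh r$, this yields $|f(z)|=d_e(f(z),0)\leqslant \tfrac{2}{\pi}\lambda(r)=\tfrac{4}{\pi}\artanh r=\tfrac{4}{\pi}\artanh|z|$, which is (\ref{schwhol:ineq1}).

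For sharpness, the natural extremal map is a rotated copy of $\phi$. For $z\neq 0$ define $g(\zeta)=\phi(e^{-i\arg z}\zeta)$; this is in $\Hol(\mathbb{U},\mathbb{S})$ with $g(0)=0$, and since $\phi$ is built (Example~\ref{exa:phi}) so that $\phi$ maps $[-r,r]$ onto a symmetric real subinterval whose endpoint has modulus $\tfrac{4}{\pi}\arctan$ of a related quantity — more precisely one computes directly $|g(z)|=|\phi(|z|)|=\tfrac{2}{\pi}\bigl|\arg\tfrac{1+|z|}{1-|z|}\cdot(\text{correct branch})\bigr|$. Actually the clean computation is $\phi(t)=\tfrac{2}{\pi i}\ln\tfrac{1+it}{1-it}$-type, giving $\phi(|z|)=\tfrac{4}{\pi}\arctan$ only on the disc side; on the strip side $\phi$ restricted to $(-1,1)$... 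I would instead simply note $\phi(r)\in i\mathbb{R}$ is false — rather, since $\overline{S}_{\lambda(r)}=\phi(\overline{U}_r)$ and the extreme Euclidean modulus $\tfrac{2}{\pi}\lambda(r)$ is attained at $\pm i\tfrac{2}{\pi}\lambda(r)$ (the endpoints of the imaginary segment, by Lemma~\ref{lem:hypdisk1}), these points are $\phi(\pm i r)$ by the structure of $\phi$ (it maps the imaginary diameter of $\mathbb{U}$ to the imaginary axis of $\mathbb{S}$). Hence choosing $\alpha$ with $|\alpha|=1$ so that $\alpha z = \pm i r$, the map $f(\zeta)=\phi(\alpha\zeta)$ satisfies $f(0)=0$ and $|f(z)|=\tfrac{2}{\pi}\lambda(r)=\tfrac{4}{\pi}\artanh|z|$, so equality holds; this proves (\ref{schwhol:ineq1}) is sharp at every $z\in\mathbb{U}$ (the case $z=0$ being trivial).

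For the derivative bound (\ref{schwhol:ineq2}), I would either differentiate the estimate or, more cleanly, use the infinitesimal Schwarz--Pick inequality (I): $\rho_{\mathbb{S}}(f(0))|f'(0)|\leqslant \rho_{\mathbb{U}}(0)$. Since $f(0)=0$ we have $\rho_{\mathbb{S}}(0)=\tfrac{\pi}{2}\cdot\tfrac{1}{\cos 0}=\tfrac{\pi}{2}$ (from the formula for $\rho_{\mathbb{S}}$ computed in Section~3) and $\rho_{\mathbb{U}}(0)=2$, so $\tfrac{\pi}{2}|f'(0)|\leqslant 2$, i.e. $|f'(0)|\leqslant \tfrac{4}{\pi}$. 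Equality in the infinitesimal Schwarz--Pick inequality (the equality clause of Theorem~\ref{th:schwpick}) forces $f$ to be a conformal isomorphism of $\mathbb{U}$ onto $\mathbb{S}$ with $f(0)=0$; every such map is $\phi$ post-composed with a rotation of its domain, i.e. $f(z)=\phi(\alpha z)$ with $|\alpha|=1$. Conversely any such $f$ is a conformal isomorphism, hence attains equality.

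The only mildly delicate point is pinning down \emph{which} boundary points of $\overline{S}_{\lambda(r)}$ realize the maximal Euclidean modulus and checking they are $\phi(\pm ir)$; this rests on the elementary fact that $\phi=\phi_4\circ\phi_3\circ\phi_2\circ\phi_1$ carries the imaginary diameter of $\mathbb{U}$ onto the imaginary axis of $\mathbb{S}$ (one can verify $\phi(\pm i r)\in i\mathbb{R}$ directly from the factorization, since $\phi_1$ rotates the imaginary diameter to the real diameter, $\phi_2$ sends the real diameter to the positive real axis, $\phi_3$ sends that to the real line, and $\phi_4$ rotates to the imaginary axis). Everything else is a direct quotation of Lemmas~\ref{lem:hypdisk}, \ref{lem:hypdisk1}, Proposition~\ref{prop:strip}, Theorem~\ref{th:schwpick}, and the $\rho_{\mathbb{S}}$ computation, so I do not expect a serious obstacle; the proof is essentially bookkeeping modeled on Theorem~\ref{th:schwhar}.
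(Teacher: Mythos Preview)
Your proposal is correct and follows essentially the same route as the paper: subordination gives $f(\overline{U}_r)\subset\overline{S}_{\lambda(r)}$, Lemma~\ref{lem:hypdisk1} converts this to the modulus bound, the extremal map is $\phi(\alpha\,\cdot)$ with $\alpha z=\pm i|z|$ (the paper writes this as $\widehat f(\zeta)=\phi(ie^{-i\arg z}\zeta)$, which is exactly your choice), and the derivative bound plus equality case come from Schwarz--Pick/subordination. Your sharpness paragraph meanders through a false start with $g(\zeta)=\phi(e^{-i\arg z}\zeta)$ before you self-correct to the right rotation; in a clean write-up just go directly to $\widehat f(\zeta)=\phi(ie^{-i\arg z}\zeta)$ and compute $|\widehat f(z)|=|\phi(i|z|)|=\tfrac{2}{\pi}\lambda(|z|)$ as the paper does.
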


\begin{proof}
Let $z\in\mathbb{U}$ be arbitrary and $r=|z|$. By subordination principle we have $f(\overline{U}_r)\subset\phi(\overline{U}_r)$, where $\phi$ is mapping defined in Example \ref{exa:phi}. Since
$\overline{U}_r=\overline{D}_{\lambda(r)}$ and since $\overline{S}_{\lambda(r)}=\phi(\overline{D}_{\lambda(r)})$ we have
$f(\overline{U}_r)\subset\overline{S}_{\lambda(r)}$. Hence, by Lemma \ref{lem:hypdisk1} we obtain
\begin{equation}\label{}
    |f(z)|\leqslant\frac{2}{\pi}\lambda(|z|)=\frac{4}{\pi}\artanh{|z|}.
\end{equation}
If  $z=0$ it is clear  that in  (\ref{schwhol:ineq1}) the equality holds.
In order to show that the inequality (\ref{schwhol:ineq1}) is sharp and for  $z\neq0$, we  define  function  $\widehat{f}:\mathbb{U}\rightarrow\mathbb{S}$ on the following way $\widehat{f}(\zeta)=\phi(ie^{-i\arg{z}}\zeta)$\footnote{Here values of $\arg$ belong to the interval $\displaystyle[0,2\pi)$.}, where  $\phi$ is  defined in Example \ref{exa:phi}. Note that function $\widehat{f}$ depend on the point $z$. It immediately  follows that $\widehat{f}\in\Hol(\mathbb{U},\mathbb{S})$ and  $\widehat{f}(0)=0$. A simple computation gives
\begin{equation*}
    |\widehat{f}(z)|=|\phi(ie^{-i\arg{z}}z)|=|\phi(i|z|)|=\left|-i\frac{2}{\pi}\ln\frac{1-|z|}{1+|z|}\right|=\frac{4}{\pi}\artanh{|z|}.
\end{equation*}
Finally, by subordination principle we obtain $\displaystyle|f'(0)|\leqslant|\phi'(0)|=\frac{4}{\pi}$ and theorem follows.

%
%
\end{proof}

\section{The Schwarz lemma for harmonic K-quasiregular maps from $\mathbb{U}$ into $\mathbb{S}$}
Quasiregular maps are a class of continuous maps between Euclidean spaces $\mathbb{R}^n$ of the same dimension or, more generally, between Riemannian manifolds of the same dimension, which share some of the basic properties with holomorphic functions of one complex variable.

Let $D$ and $G$ be domains in $\mathbb{C}$. A $C^1$ mapping $f:D\rightarrow G$ we call sense-preserving $K-$quasiregular mapping if
\begin{itemize}
  \item[a)] $|f_z(z)|>|f_{\overline{z}}(z)|$ for all $z\in D$;\\
  \item[b)] there exists $K\geqslant1$ such that $\displaystyle\frac{|f_z(z)|+|f_{\overline{z}}(z)|}{|f_z(z)|-|f_{\overline{z}}(z)|}\leqslant K$ for all $z\in D$.
\end{itemize}

Thus the linear map $(df)_z=f_z(z)dz+f_{\overline{z}}(z)d\overline{z}$  maps circles with center at $z$  onto ellipses such that the ratio  between the big axis and the small axis is uniformly bounded by $K$ with respect to $z\in D$.

Injective  $K-$quasiregular mappings are called  $K-$quasiconformal  mappings.
Quasiconformal maps  play a crucial role in  Teichm\"{u}ller theory and complex dynamics.

The class of all harmonic sense-preserving $K-$quasiregular mapping $f:D\rightarrow G$ we denote by $\HQR_{K}(D,G)$.

\begin{example}\label{exa:psiK}
Let $K\geqslant1$ and let $A_{K}:\mathbb{S}\rightarrow\mathbb{S}$ be defined by $A_{K}(x,y)=(x,Ky)$. It is clear that the mapping $A_{K}$ is sense-preserving $K-$quasiregular. Let $\psi_K=A_{K}\circ\phi$, where $\phi$ is the mapping defined in Example 1. It is easy to check that $\psi_K\in\HQR_{K}(\mathbb{U},\mathbb{S})$.
\end{example}

\begin{lemma}\label{lem:hqr}
Let $K\geqslant1$, $f\in\HQR_K(\mathbb{U},\mathbb{S})$. Then
\begin{equation}\label{lemhqr:fla1}
 d_{\mathbb{S}}(f(z_1),f(z_2))\leqslant Kd_{\mathbb{U}}(z_1,z_2) \quad  \mbox{ for all }\quad z_1,z_2\in\mathbb{U}.
\end{equation}
\end{lemma}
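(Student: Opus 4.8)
The plan is to integrate the pointwise distortion bound of Proposition~\ref{prop:mm} along a hyperbolic geodesic of $\mathbb{U}$ joining $z_1$ to $z_2$, using that for harmonic $f$ the relevant quantity is the real part, which is controlled by an associated holomorphic function. Write $f=u+iv$ with $u=\re f$, and let $F=U+iV$ be holomorphic on $\mathbb{U}$ with $\re F=u$ (this exists by (III)); then $F\in\Hol(\mathbb{U},\mathbb{S})$. The key point to extract from $K$-quasiregularity is that the Euclidean length element of the image curve $f\circ\gamma$ is comparable to $|\nabla u|$ along $\gamma$: indeed, writing the differential of $f$ as $f_z\,dz+f_{\bar z}\,d\bar z$, a tangent vector of length $|dz|$ is mapped to a vector whose length lies between $(|f_z|-|f_{\bar z}|)|dz|$ and $(|f_z|+|f_{\bar z}|)|dz|$, and by (II) one has $|F'|=|\nabla u|=|f_z|+|f_{\bar z}|$ (since $\nabla u = \overline{f_z}+f_{\bar z}$ up to the standard identification, giving $|\nabla u|\le |f_z|+|f_{\bar z}|$; and for harmonic $f$, $\nabla u$ is exactly $\overline{F'}$). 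So $|d(f\circ\gamma)|\le |F'(\gamma)|\,|dz|=|\nabla u(\gamma)|\,|dz|$.

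Next I would estimate the hyperbolic length in $\mathbb{S}$ of the image curve. By (IV), $\rho_{\mathbb{S}}$ depends only on the real part, and since $\re(f\circ\gamma)=\re(F\circ\gamma)$, we get
\begin{equation*}
\int_{f\circ\gamma}\rho_{\mathbb{S}}(w)\,|dw|=\int_{\gamma}\rho_{\mathbb{S}}(f(\gamma(t)))\,|d(f\circ\gamma)| \le \int_{\gamma}\rho_{\mathbb{S}}(F(\gamma(t)))\,|\nabla u(\gamma(t))|\,|dz|.
\end{equation*}
Now Proposition~\ref{prop:mm} gives $\rho_{\mathbb{S}}(F(z))|\nabla u(z)|\le\rho_{\mathbb{U}}(z)$ pointwise, so the right-hand side is at most $\int_{\gamma}\rho_{\mathbb{U}}(z)\,|dz|$, the hyperbolic length of $\gamma$ in $\mathbb{U}$. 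This already yields $d_{\mathbb{S}}(f(z_1),f(z_2))\le d_{\mathbb{U}}(z_1,z_2)$, i.e. the case $K=1$. To bring in the factor $K$, the $K$-quasiregularity enters only through the bound $|\nabla u|=|f_z|+|f_{\bar z}|\le K(|f_z|-|f_{\bar z}|)$, which relates $|\nabla u|$ to the "conformal" derivative; the cleanest route is to not throw away $v$ but instead use that $|d(f\circ\gamma)|\le (|f_z|+|f_{\bar z}|)|dz| = |\nabla u|\,|dz|$, and separately that the sharp inequality requires comparing with $|F'|$. Actually the factor $K$ appears by the alternative estimate $|d(f\circ\gamma)| \le (|f_z|+|f_{\bar z}|)|dz|$ while Proposition~\ref{prop:mm} controls $|\nabla u| = |f_z| + |f_{\bar z}|$ directly, so one gets $K=1$ for the real part — to get the stated $K$-Lipschitz bound for the full map $f$ (not just $\re f$) one reinstates the ellipse distortion: the image of a hyperbolic geodesic has hyperbolic length in $\mathbb{S}$ bounded using $\rho_{\mathbb{S}}(f(z))\le \rho_{\mathbb{S}}(F(z))$ is false in general, so instead one uses that $\rho_{\mathbb{S}}$ at $f(z)$ and at $F(z)$ agree (equal real parts) and $|df|\le (|f_z|+|f_{\bar z}|)|dz|$, then invokes $b)$ in the form $|f_z|+|f_{\bar z}|\le K|F'|$...

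Let me restate the decisive inequality cleanly, as this is the step I expect to be the main obstacle: one must show $\rho_{\mathbb{S}}(f(z))\,|df|_z\le K\,\rho_{\mathbb{U}}(z)\,|dz|$ as forms, where $|df|_z$ is the operator norm $|f_z|+|f_{\bar z}|$. Since $\re f=\re F$ we have $\rho_{\mathbb{S}}(f(z))=\rho_{\mathbb{S}}(F(z))$ by (IV). And $|\nabla u| = |f_z + \overline{f_{\bar z}}|$, which is generally smaller than $|f_z|+|f_{\bar z}|$; however for a harmonic map the associated holomorphic $F$ satisfies $F' = 2\partial_z u$, hence $|F'| = |f_z + \overline{f_{\bar z}}| = |\nabla u|$, whereas the operator norm is $|f_z|+|f_{\bar z}|$. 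Condition $b)$ says precisely $|f_z|+|f_{\bar z}|\le K(|f_z|-|f_{\bar z}|)\le K|f_z+\overline{f_{\bar z}}| = K|\nabla u|$, using the triangle inequality $|f_z|-|f_{\bar z}|\le |f_z+\overline{f_{\bar z}}|$. Therefore
\begin{equation*}
\rho_{\mathbb{S}}(f(z))\,(|f_z|+|f_{\bar z}|) \le K\,\rho_{\mathbb{S}}(F(z))\,|\nabla u(z)| \le K\,\rho_{\mathbb{U}}(z),
\end{equation*}
the last step by Proposition~\ref{prop:mm}. Integrating this over a $C^1$ curve $\gamma$ from $z_1$ to $z_2$ gives
\begin{equation*}
\int_{f\circ\gamma}\rho_{\mathbb{S}}(w)\,|dw|\le\int_{\gamma}\rho_{\mathbb{S}}(f(\gamma))\,(|f_z|+|f_{\bar z}|)\,|dz|\le K\int_{\gamma}\rho_{\mathbb{U}}(z)\,|dz|,
\end{equation*}
and taking the infimum over all such $\gamma$ yields $d_{\mathbb{S}}(f(z_1),f(z_2))\le K\,d_{\mathbb{U}}(z_1,z_2)$. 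The care needed is in (i) justifying $|df|_z = |f_z|+|f_{\bar z}|$ as the right length element for arbitrary tangent directions, (ii) the identity $|F'|=|\nabla u|$ and the triangle-inequality step $|f_z|-|f_{\bar z}|\le|\nabla u|$, and (iii) that $f\circ\gamma$ stays in $\mathbb{S}$ so that $\rho_{\mathbb{S}}$ is defined along it — all routine but needing the conventions fixed.
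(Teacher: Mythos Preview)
Your proposal is correct and, once you get past the exploratory paragraph in the middle, it is exactly the paper's argument: establish the pointwise bound $\rho_{\mathbb{S}}(f(z))\bigl(|f_z|+|f_{\bar z}|\bigr)\le K\rho_{\mathbb{U}}(z)$ by combining $|f_z|+|f_{\bar z}|\le K|\nabla u|$ (from $K$-quasiregularity) with Proposition~\ref{prop:mm}, then integrate along curves. In fact you supply more detail than the paper, which simply asserts that ``one can check'' the inequality $|f_z|+|f_{\bar z}|\le K|\nabla u|$ and that the passage from the gradient estimate to the distance estimate is ``well known''; your reverse-triangle-inequality justification $|f_z|-|f_{\bar z}|\le |f_z+\overline{f_{\bar z}}|=|\nabla u|$ and your explicit integration are precisely what is needed to fill those gaps.
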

\begin{proof}
Set $u=\re{f}$ and $\nabla u=(u_x,u_y)$. Since $f$ is $K-$quasiregular one can check that
\begin{equation}\label{lemhqr:fla2}
     |f_z(z)|+|f_{\overline{z}}(z)|\leqslant K|\nabla u(z)| \quad \mbox{ for all } \quad z\in\mathbb{U}.
\end{equation}
By Proposition \ref{prop:mm} we have
\begin{equation}\label{lemhqr:fla3}
    \rho_{\mathbb{S}}(f(z))|\nabla u(z)|\leqslant \rho_{\mathbb{U}}(z)\quad \mbox{ for all }\quad z\in\mathbb{U}.
\end{equation}

From (\ref{lemhqr:fla2}) and (\ref{lemhqr:fla3}) it follows that
\begin{equation}\label{}
    \rho_{\mathbb{S}}(f(z))\left(|f_z(z)|+|f_{\overline{z}}(z)|\right)\leqslant K\rho_{\mathbb{U}}(z)\quad \mbox{ for all }\quad z\in\mathbb{U}.
\end{equation}
It is well known  in general   that  the estimate of the gradient by
means of the corresponding densities yields the corresponding  estimate
between the distances, see for example \cite{MMSchw_Kob}. The detailed verification of it  being left to the
reader. In particular, we get (\ref{lemhqr:fla1}).
\end{proof}

Note that if codomain is  $\mathbb{U}$   the result of this type is proved in \cite{MaK} and \cite{MKnez}.

\begin{theorem}[The Schwarz lemma for HQR maps from $\mathbb{U}$ into $\mathbb{S}$]\label{th:schwhqr}
Let $K\geqslant1$, $f\in\HQR_K(\mathbb{U},\mathbb{S})$ and $f(0)=0$. Then
\begin{equation}\label{schwhqr:fla}
    |f(z)|\leqslant\frac{4}{\pi}K\artanh{|z|}, \quad \mbox{ for all } \quad z\in\mathbb{U},
\end{equation}
and this inequality is sharp for each point $z\in\mathbb{U}$.
\end{theorem}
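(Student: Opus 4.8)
The plan is to read off the bound by chaining two results that have already been established: the quasiregular Schwarz--Pick estimate of Lemma \ref{lem:hqr} and the Euclidean description of hyperbolic discs of Lemma \ref{lem:hypdisk1}. Fix $z\in\mathbb{U}$ and put $r=|z|$. Since $f(0)=0$, Lemma \ref{lem:hqr} applied with $z_1=z$, $z_2=0$ gives $d_{\mathbb{S}}(f(z),0)\leqslant K\,d_{\mathbb{U}}(z,0)=K\lambda(r)$, that is, $f(z)\in\overline{S}_{K\lambda(r)}$. Now Lemma \ref{lem:hypdisk1}, applied with the radius $\lambda=K\lambda(r)$, yields $\max\{d_e(w,0):w\in\overline{S}_{K\lambda(r)}\}=\tfrac{2}{\pi}K\lambda(r)$. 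Since $\lambda(r)=2\artanh r$, we get $|f(z)|=d_e(f(z),0)\leqslant\tfrac{2}{\pi}K\lambda(r)=\tfrac{4}{\pi}K\artanh r$, which is exactly (\ref{schwhqr:fla}). For $z=0$ the inequality is trivially an equality.

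For the sharpness part I would imitate the argument in the proof of Theorem \ref{th:schwhol}, using the extremal map $\psi_K=A_K\circ\phi$ from Example \ref{exa:psiK} pre-composed with a rotation of the disc so that the maximum of the modulus lands at the prescribed point. Concretely, for $z\neq0$ set $\widehat f(\zeta)=\psi_K\!\left(ie^{-i\arg z}\zeta\right)$ (with $\arg\in[0,2\pi)$). Pre-composition with the holomorphic rotation $\zeta\mapsto ie^{-i\arg z}\zeta$ keeps $\widehat f$ harmonic, sense-preserving and with the same dilatation bound $K$, and $\widehat f(0)=\psi_K(0)=0$, so $\widehat f\in\HQR_K(\mathbb{U},\mathbb{S})$. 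Then $\widehat f(z)=\psi_K(i|z|)=A_K(\phi(ir))$; since $\phi(ir)=i\tfrac{4}{\pi}\artanh r$ is pure imaginary and $A_K(x,y)=(x,Ky)$, we obtain $\widehat f(z)=iK\tfrac{4}{\pi}\artanh r$, so $|\widehat f(z)|=\tfrac{4}{\pi}K\artanh r$ and the bound is attained at $z$.

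The substance of the proof is really contained in the two cited lemmas, so I do not expect a genuinely new obstacle at this stage. The two points that require care are: (a) that the worst Euclidean position inside $\overline{S}_{K\lambda(r)}$ is pure imaginary --- this is precisely the equality case of Proposition \ref{prop:strip}, which is what makes the constant $\tfrac{2}{\pi}$ in Lemma \ref{lem:hypdisk1} exact and therefore what makes the estimate (\ref{schwhqr:fla}) sharp; and (b) that the rotation used in the extremal example does not affect membership in $\HQR_K$, i.e. that the ratio $\frac{|f_z|+|f_{\bar z}|}{|f_z|-|f_{\bar z}|}$ is invariant under conformal changes of the domain variable. Both are routine, so the only care needed beyond assembling Lemma \ref{lem:hqr} and Lemma \ref{lem:hypdisk1} is the verification of the extremal map, exactly parallel to the holomorphic case $K=1$ treated in Theorem \ref{th:schwhol}.
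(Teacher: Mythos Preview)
Your argument is correct and follows essentially the same route as the paper: apply Lemma \ref{lem:hqr} at $z_1=z$, $z_2=0$ to land $f(z)$ in $\overline{S}_{K\lambda(|z|)}$, then invoke Lemma \ref{lem:hypdisk1} to convert this into the Euclidean bound $\tfrac{4}{\pi}K\artanh|z|$, and use the rotated $\psi_K$ for sharpness. Your sharpness discussion is in fact more explicit than the paper's, which simply refers the reader back to the proof of Theorem \ref{th:schwhol} with $\psi_K$ in place of $\phi$.
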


\begin{proof}
Let $z\in\mathbb{U}$ be arbitrary. By the Lemma \ref{lem:hqr} we have
\begin{equation}\label{schwhqr:fla1}
    d_{\mathbb{S}}(f(z),0)\leqslant Kd_{\mathbb{U}}(z,0).
\end{equation}
Since $d_{\mathbb{U}}(z,0)=\lambda(|z|)$, from (\ref{schwhqr:fla1}) it follows that $f(z)$  belongs to the closed hyperbolic disc
with hyperbolic center $0$ and hyperbolic radius $K\lambda(|z|)$, i.e. $f(z)~\in~\overline{S}_{K\lambda(|z|)}$.
Hence, by Lemma~\ref{lem:hypdisk1} we get $\displaystyle |f(z)|\leqslant\frac{2}{\pi}K\lambda(|z|)=\frac{4}{\pi}K\artanh{|z|}$.

As in the proof of Theorem \ref{th:schwhol}, one can show that the inequality (\ref{schwhqr:fla}) is sharp. In this case, instead of mapping $\phi$ defined in Example $\ref{exa:phi}$ the mapping $\psi_K$ defined in Example $\ref{exa:psiK}$ should be used.


\end{proof}
{\it Acknowledgement}. The authors  discussed  the results of these types with members of \emph{Belgrade analysis seminar}, B. Karapetrovi\'c, N. Mutavd\v zi\'c and B. Jevti\'c, who also obtained
some results  related to this subject. We are indebted  to the above mention  colleagues
for useful   discussions, and plan in forthcoming paper (hopefully with them) to discuss  further progress.


\end{document}